\newtheorem{theorem}{Theorem}[section]
\newtheorem{lemma}[theorem]{Lemma}
\newtheorem{corollary}[theorem]{Corollary}
\theoremstyle{definition}
\newtheorem{definition}[theorem]{Definition}
\newtheorem{example}[theorem]{Example}
\numberwithin{equation}{section}
\theoremstyle{remark}
\newcommand{\dmu}{\, d \mu}
\newcommand{\dla}{\, d \lambda}
\def\Xint#1{\mathchoice
   {\XXint\displaystyle\textstyle{#1}}%
   {\XXint\textstyle\scriptstyle{#1}}%
   {\XXint\scriptstyle\scriptscriptstyle{#1}}%
   {\XXint\scriptscriptstyle\scriptscriptstyle{#1}}%
   \!\int}
\def\XXint#1#2#3{{\setbox0=\hbox{$#1{#2#3}{\int}$}
     \vcenter{\hbox{$#2#3$}}\kern-.5\wd0}}
\def\dashint{\Xint-}
\newcommand{\citecomment}[2][]{\citen{#2}#1\citevar}
\newcommand{\citeone}[1]{\citecomment{#1}}
\newcommand{\citetwo}[2][]{\citecomment[,~#1]{#2}}
\newcommand{\citevar}{\@ifnextchar\bgroup{;~\citeone}{\@ifnextchar[{;~\citetwo}{]}}}
\newcommand{\citefirst}{\@ifnextchar\bgroup{\citeone}{\@ifnextchar[{\citetwo}{]}}}
\begin{document}

\title{Weak Gurov--Reshetnyak class in metric measure spaces}

\author{Kim Myyryl\"ainen}
\address{Department of Mathematics, Aalto University, P.O. Box 11100, FI-00076 Aalto, Finland}
\email{kim.myyrylainen@aalto.fi}
\thanks{The research was supported by the Magnus Ehrnrooth Foundation.
The author would like to thank Juha Kinnunen for valuable discussions.
The author would also like to thank the anonymous referee for carefully reading the paper and for constructive comments.
}

\subjclass[2020]{42B35, 43A85}

\keywords{Gurov--Reshetnyak class, reverse H\"older inequality, doubling measure, metric space}

\begin{abstract}
We introduce a weak Gurov--Reshetnyak class and discuss its connections to 
a weak Muckenhoupt $A_\infty$ condition and
a weak reverse H\"older inequality in the setting of metric measure spaces with a doubling measure.
A John--Nirenberg type lemma 
is shown
for the weak Gurov--Reshetnyak class 
which gives a specific decay estimate for the oscillation of a function.
It implies that a function in the weak Gurov--Reshetnyak class satisfies the weak reverse H\"older inequality.
This comes with an upper bound for the reverse H\"older exponent depending on the Gurov--Reshetnyak parameter
which allows
the study of the asymptotic behavior of 
the exponent.

\end{abstract}

\maketitle

\section{Introduction}

The Gurov--Reshetnyak class
was first 
introduced and 
studied by Gurov and Reshetnyak~\cite{gurov1975,gurovreshetnyak1976}
in the context of quasiconformal mappings.
In addition, it has applications in the theory of PDEs and reverse H\"older inequalities, see for example~\cite{bojarski1985,iwaniec1982,reshetnyak1994}.
In 2002, Korenovskyy, Lerner and Stokolos~\cite{KorenovskyyLernerStokolos2002}
showed that the
Gurov--Reshetnyak condition $GR_\varepsilon$, $0<\varepsilon<2$,
\[
\int_{B} \lvert w-w_{B} \lvert \dmu \leq \varepsilon w(B)
\]
characterizes the Muckenhoupt $A_\infty$ condition
\[
w(B \cap \{ \alpha w \geq w_{B} \}) \leq \beta w(B)
\]
in the Euclidean setting,
see also~\cite{Korenovskii2007}.
We observe that their argument works not only in the Euclidean setting but also in metric measure spaces.
Here $w$ is a nonnegative locally integrable function with respect to the measure $\mu$, $B$ is a ball,
$w_B$ is the integral average of $w$ in a ball $B$ 
and $\alpha,\beta \in (0,1)$.
With a slight abuse of notation $w$ also denotes the measure induced by the weight.
The $A_\infty$ condition
is equivalent to the fact that there exists $1<p<\infty$ such that the reverse H\"older inequality
\[
\biggl( \dashint_{B} w^p \dmu \biggr)^\frac{1}{p} \leq C \dashint_{B} w \dmu
\]
holds.
There exist many 
other equivalent conditions, see for example~\cite{kortekansanen2011,duo2013,duomartinombrosi2016}.
However,
extra care is needed 
in metric measure spaces with a doubling measure $\mu$ 
since 
the equivalence of some conditions might fail
without 
an additional geometrical assumption
on the space, for example
the annular decay property 
or the continuity of the measure of a ball with respect to its radius, see~\cite{strombergtorchinsky1989,kinnunenshukla2014a,kinnunenshukla2014b}.

This paper extends 
the discussed results in the following direction.
Let $\mu$ be a doubling measure in a metric measure space,
$\sigma>1$ and $0<\varepsilon<1$.
Here we do not need to impose any additional assumptions on the space.
We show that the following notion of the weak Gurov--Reshetnyak class
\begin{equation}
\label{intro:GR}
\int_{B} (w - w_{\sigma B})_+ \dmu \leq \varepsilon w(\sigma B)
\end{equation}
characterizes the weak $A_\infty$ condition
\[
w(B \cap \{ \alpha w \geq w_{\sigma B} \}) \leq \beta w(\sigma B)
\]
in Theorem~\ref{weakGRequiv}.
This in turn 
coincides with the weak reverse H\"older inequality
\begin{equation*}
\biggl( \dashint_{B} w^p \dmu \biggr)^\frac{1}{p} \leq C \dashint_{\sigma B} w \dmu
\end{equation*}
with $p>1$ and $C>0$,
whenever the parameters in the weak $A_\infty$ condition are restricted depending on the measure, see~\cite{KinnunenKurkiMudarra2022}.
The weak $A_\infty$ and the weak reverse H\"older inequality have recently been 
discussed 
in~\cite{andersonhytonentapiola2017,KinnunenKurkiMudarra2022,Spadaro2012,IndratnoMaldonadoSilwal2015,hytonenperezrela2012}.
Surprisingly, it is enough to have the positive part of the oscillation in~\eqref{intro:GR}.
Note that the definition of \eqref{intro:GR} with the absolute value implies the definition with the positive part. 
Thus, \eqref{intro:GR} covers a wider class of functions and is a natural definition in metric measure spaces.
In Theorem~\ref{GRneg}, we observe that the corresponding definition with the negative part
of the oscillation
coincides with a different weak $A_\infty$ condition 
where we have the measure $\mu$ and the sublevel set instead of the weighted measure $w$ and the superlevel set.
The proofs of Theorems~\ref{weakGRequiv} and~\ref{GRneg}
are inspired by~\cite{KorenovskyyLernerStokolos2002}.
Other weak 
formulations of the Gurov--Reshetnyak class have been considered in~\cite{iwaniec1982,KorenovskyyLernerStokolos2007,bojarski1989,BerkovitsKinnunenMartell}.

A particularly useful result is the John--Nirenberg lemma for $GR_\varepsilon$ 
which gives a specific decay estimate for the mean oscillation of a function in $GR_\varepsilon$.
It tells that
a $GR_\varepsilon$ function
is locally integrable to a higher power $p>1$ depending on $\varepsilon$.
This has been studied in the Euclidean setting in \cite{bojarski1985,bojarski1989,franciosi1989,franciosi1991,gurov1975,gurovreshetnyak1976,iwaniec1982,korenovskii1990,wik1987} and in metric measure spaces in \cite{AaltoBerkovits,BerkovitsKinnunenMartell}.
In Theorem~\ref{reshetnyak},
we show a John--Nirenberg lemma for the weak Gurov--Reshetnyak class.
The argument is based on a Calder\'{o}n--Zygmund decomposition in metric measure spaces.
The John--Nirenberg lemma implies that the weak Gurov--Reshetnyak class is self-improving, in particular, 
a function
is locally integrable to a higher power $p>1$ with reverse H\"older type bounds, see Corollary~\ref{oscRHI}.
Another consequence 
is the weak reverse H\"older inequality in Corollary~\ref{reverseHolder1} with the exponent $p>1$ depending on $\varepsilon$.
Our 
approach 
allows the study of the asymptotic behavior of
the previous results,
in particular,
the growth of the upper bound of the exponent $p$ to infinity when $\varepsilon$ tends to zero.
This is analogous to the classical setting.
Moreover, our approach gives an alternative proof for the classical Gurov--Reshetnyak self-improvement in metric measure spaces in addition to~\cite{AaltoBerkovits, BerkovitsKinnunenMartell}
and extends them by assuming weaker condition~\eqref{intro:GR}.
To our knowledge, 
definition~\eqref{intro:GR} and concerning results are new even in the Euclidean setting.

\section{Weak Gurov--Reshetnyak class}

Let $(X,d,\mu)$ be a metric measure space with a metric $d$ and a doubling measure~$\mu$. 
A Borel regular measure is said to be doubling if
\[
0 < \mu(2B) \leq C_\mu \mu(B) < \infty
\]
for every ball $B = B(x,r) = \{ y\in X: d(x,y) < r \}$, where $C_\mu>1$ is the doubling constant.
We use the notation $\lambda B = B(x,\lambda r)$, $\lambda>0$, for the $\lambda$-dilate of $B$.
From the doubling property of the measure, it can be deduced that if $y \in B(x,R) \subset X$ and $0<r\leq R <\infty$, then
\begin{equation}
\label{eq:measureratio}
\frac{\mu(B(x,R))}{\mu(B(y,r))} \leq C_\mu^2 \biggl( \frac{R}{r} \biggr)^D,
\end{equation}
where $D = \log_2 C_\mu$ is the doubling dimension of the space $(X,d,\mu)$. The proof can be found in~\cite[p.~6]{bjorn}.
Unless otherwise stated, constants are positive and the dependencies on parameters are indicated in the brackets.

The integral average of $f \in L^1(A)$ over a measurable set $A\subset X$, with $0<\mu(A)<\infty$, is denoted by
\[
f_A = \dashint_A f \dmu = \frac{1}{\mu(A)} \int_A f \dmu .
\]
The positive and the negative parts of a function $f$ are denoted by
\[
f_+ = \max\{ f, 0 \} 
\quad\text{and}\quad
f_- = - \min\{ f , 0 \} .
\]

This section discusses the weak Gurov--Reshetnyak class and its relation to the weak reverse H\"older inequality. 

\begin{definition}
\label{weakGR}

Let $\Omega\subset X$ be an open set, $0<\varepsilon<1$ and $\sigma \geq 1$.
A nonnegative locally integrable function 
$w\in L^1_{\text{\normalfont loc}}(\Omega)$
belongs to the weak Gurov--Reshetnyak class denoted by
$WGR_{\varepsilon,\sigma}(\Omega)$
if
\[
\int_{B} (w - w_{\sigma B})_+ \dmu \leq \varepsilon w(\sigma B)
\]
for every ball 
$ \sigma B \subset \Omega$.
If $\sigma=2$, we denote $WGR_{\varepsilon}(\Omega) = WGR_{\varepsilon,2}(\Omega)$.

\end{definition}

The weak Gurov--Reshetnyak class is appropriately called weak since for $w \in GR_\varepsilon$, $ 0<\varepsilon<2$, we have
\[
\int_B (w-w_{\sigma B})_+ \dmu \leq \int_{\sigma B} (w-w_{\sigma B})_+ \dmu = \frac{1}{2} \int_{\sigma B} \lvert w-w_{\sigma B} \lvert \dmu \leq \frac{\varepsilon}{2} w(\sigma B),
\]
that is, $w \in WGR_{\varepsilon/2,\sigma} $ for every $\sigma>1$.
Thus, it holds that $GR_\varepsilon \subset WGR_{\varepsilon/2,\sigma}$.
The inclusion is proper since by considering the example $w(x)=e^x$ on $\mathbb{R}$ with the Lebesgue measure
we see that
$w\in WGR_{\varepsilon/2,\sigma} \setminus GR_\varepsilon$.
Moreover, observe that $WGR_{\varepsilon/2,1} = GR_{\varepsilon}$ since
\[
\int_B (w-w_{B})_+ \dmu = \frac{1}{2} \int_{ B} \lvert w-w_{ B} \lvert \dmu \leq \frac{\varepsilon}{2} w(B) .
\]

The following theorem shows that the weak Gurov--Reshetnyak class with $0<\varepsilon<1$ coincides with
superlevel set measure condition
\eqref{weakA} 
for $\alpha,\beta \in (0,1)$.

\begin{theorem}
\label{weakGRequiv}
Let $B\subset X$ be a ball, $\sigma\geq1$ and
$w$ be a nonnegative locally integrable function.
\begin{enumerate}[(i)]
\item 
Assume that there exists $0 < \varepsilon < 1$ such that
\[
\int_{B} (w - w_{\sigma B})_+ \dmu \leq \varepsilon w(\sigma B) .
\]
Then for $\varepsilon < \lambda < 1 $ we have
\[
w(B \cap \{ (1 - \tfrac{\varepsilon}{\lambda} ) w \geq w_{\sigma B} \}) \leq \lambda w(\sigma B) .
\]

\item
Assume that there exist $\alpha,\beta \in (0,1)$ such that
\begin{equation}
\label{weakA}
w(B \cap \{ \alpha w \geq w_{\sigma B} \}) \leq \beta w(\sigma B) .
\end{equation}
Then we have
\[
\int_{B} (w - w_{\sigma B})_+ \dmu \leq (1-\alpha (1-\beta)) w(\sigma B) .
\]

\end{enumerate}
\end{theorem}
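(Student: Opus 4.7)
Both parts reduce to elementary pointwise comparisons between $(w-w_{\sigma B})_+$ and $w$ itself, with the natural splitting always being along the set $E = B\cap\{\alpha w \ge w_{\sigma B}\}$ that appears in the weak $A_\infty$ condition.

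For part (i), I would set $\alpha = 1-\varepsilon/\lambda$, which lies in $(0,1)$ thanks to the assumption $\varepsilon<\lambda<1$. On $E$ the defining inequality $w_{\sigma B}\le \alpha w$ upgrades the positive part from below: $(w-w_{\sigma B})_+ \ge w - \alpha w = (\varepsilon/\lambda)\,w$. A Chebyshev-type integration then gives
\[
\frac{\varepsilon}{\lambda}\, w(E) \le \int_E (w-w_{\sigma B})_+ \dmu \le \int_B (w-w_{\sigma B})_+ \dmu \le \varepsilon\, w(\sigma B),
\]
and dividing by $\varepsilon/\lambda$ yields the claim. This step is essentially a one-line computation.

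For part (ii), the strategy is dual: split $B = E\cup (B\setminus E)$ with $E$ as above, and bound $(w-w_{\sigma B})_+$ by a multiple of $w$ on each piece. On $E$ the crude pointwise bound $(w-w_{\sigma B})_+ \le w$ is enough. On $B\setminus E$, the key observation is that $\alpha w < w_{\sigma B}$ forces $w - w_{\sigma B} < w - \alpha w = (1-\alpha)\,w$, so $(w-w_{\sigma B})_+ \le (1-\alpha)\,w$ there as well. Adding the two contributions and rearranging,
\[
\int_B (w-w_{\sigma B})_+ \dmu \le w(E) + (1-\alpha)\,w(B\setminus E) = \alpha\, w(E) + (1-\alpha)\, w(B).
\]
Invoking the hypothesis \eqref{weakA} to control $w(E)\le \beta\, w(\sigma B)$ and the trivial monotonicity $w(B)\le w(\sigma B)$ for the second term produces the stated bound $(1-\alpha(1-\beta))\,w(\sigma B)$.

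No step is genuinely difficult; the only point requiring the right viewpoint is recognizing that on $B\setminus E$ one should bound $(w-w_{\sigma B})_+$ by $(1-\alpha)\,w$ rather than by the constant $(1-\alpha)\,w_{\sigma B}/\alpha$, since the latter choice would leave a $\mu(B\setminus E)$ term that does not recombine into the target quantity $w(\sigma B)$ in a general metric measure space. Note also that the argument does not require any additional geometric assumption on $(X,d,\mu)$ beyond the doubling property implicit in the framework.
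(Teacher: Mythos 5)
Your proposal is correct and follows essentially the same argument as the paper: part (i) is the same Chebyshev-type estimate on the superlevel set $E=B\cap\{(1-\varepsilon/\lambda)w\ge w_{\sigma B}\}$, and part (ii) uses the same splitting of $B$ along $E$ with the pointwise bounds $(w-w_{\sigma B})_+\le(1-\alpha)w$ off $E$ and $\le w$ on $E$, followed by the identical algebra $(1-\alpha)w(B)+\alpha w(E)\le(1-\alpha(1-\beta))w(\sigma B)$.
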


\begin{proof}
We first show that (i) holds.
Let $E = B \cap \{ (1- \tfrac{\varepsilon}{\lambda} ) w \geq w_{\sigma B} \}$.
We obtain
\begin{align*}
\frac{\varepsilon}{\lambda} w(E) &= \frac{\varepsilon}{\lambda} \int_{E} w \dmu \leq \int_{E} (w-w_{\sigma B}) \dmu \leq \int_{B \cap \{ w \geq w_{\sigma B} \}} (w-w_{\sigma B}) \dmu \\
& = \int_{B} (w-w_{\sigma B})_+ \dmu \leq\varepsilon w(\sigma B) ,
\end{align*}
which implies that $w(E) \leq \lambda w(\sigma B) $.

For the other direction, we set $E = B \cap \{ \alpha w \geq w_{\sigma B} \}$
and $E^c = B \setminus E = B \cap \{ \alpha w < w_{\sigma B} \}$.
Then $w(E) \leq \beta w(\sigma B)$ and it holds that
\begin{align*}
\int_{B} (w - w_{\sigma B})_+ \dmu &= \int_{B \cap \{ w > w_{\sigma B} \}} (w-w_{\sigma B}) \dmu \\
&=\int_{B \cap \{ w_{\sigma B} < w < \frac{1}{\alpha} w_{\sigma B} \}} (w-w_{\sigma B}) \dmu + \int_{E} (w-w_{\sigma B}) \dmu \\
&\leq (1-\alpha) w(E^c)  + w(E) \\
&= (1-\alpha) w(B)  + \alpha w(E) \\
&\leq (1-\alpha) w(\sigma B)  + \alpha \beta w(\sigma B) \\
&= (1-\alpha (1-\beta)) w(\sigma B).
\end{align*}
This completes the proof.
\end{proof}

By the next lemma,
we know that
superlevel set measure condition
\eqref{weakA} characterizes the weak reverse H\"older inequality when $\beta<C_\mu^{-\lfloor \log_2(5\sigma^2) \rfloor-1}$,
see \cite[Theorem~4.4]{KinnunenKurkiMudarra2022}.
If we restrict $0<\varepsilon<C_\mu^{-\lfloor \log_2(5\sigma^2) \rfloor-1}$, then the argument in (i) of Theorem~\ref{weakGRequiv} works and we obtain condition \eqref{weakA} with $\beta<C_\mu^{-\lfloor \log_2(5\sigma^2) \rfloor-1}$
and thus the weak reverse H\"older inequality.

\begin{lemma}
\label{wRHI-quali}
Let $\Omega\subset X$ be an open set,
$\sigma>1$ and $w$ be a nonnegative locally integrable function.
The following statements are equivalent.
\begin{enumerate}[(i)]
\item 
There exist $p>1$ and a constant $C>0$ such that
\[
\biggl( \dashint_{B} w^p \dmu \biggr)^\frac{1}{p} \leq C \dashint_{\sigma B} w \dmu
\]
for every ball $\sigma B\subset \Omega$.
\item
There exist $\alpha,\beta>0$ with 
$\beta < C_\mu^{-\lfloor \log_2(5\sigma^2) \rfloor-1}$
such that
\begin{equation*}
w(B \cap \{ \alpha w \geq w_{\sigma B} \}) \leq \beta w(\sigma B)
\end{equation*}
for every ball $\sigma B\subset \Omega$.
\end{enumerate}

\end{lemma}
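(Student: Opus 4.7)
For the implication (i) $\Rightarrow$ (ii), the plan is a direct combination of Chebyshev's inequality and the reverse H\"older bound. Fix $\alpha\in(0,1)$ to be chosen and set $E = B \cap \{\alpha w \ge w_{\sigma B}\}$. On $E$ one has $1 \le (\alpha w/w_{\sigma B})^{p-1}$, whence
\[
w(E) \le \frac{\alpha^{p-1}}{w_{\sigma B}^{p-1}} \int_{B} w^p \dmu \le \frac{\alpha^{p-1}}{w_{\sigma B}^{p-1}}\, C^p \mu(B) w_{\sigma B}^p \le C^p \alpha^{p-1}\, w(\sigma B),
\]
where the middle inequality is (i) written in integrated form and the last uses $\mu(B)\le\mu(\sigma B)$. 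Choosing $\alpha$ small enough that $C^p \alpha^{p-1}$ falls below the prescribed threshold $C_\mu^{-\lfloor \log_2(5\sigma^2)\rfloor -1}$ then gives (ii) with $\beta = C^p\alpha^{p-1}$.

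For the harder direction (ii) $\Rightarrow$ (i), I would run a Calder\'on--Zygmund stopping time argument on $w$ at the geometric thresholds $\lambda_k = \alpha^{-k} w_{\sigma B}$, $k=0,1,2,\dots$. At each stage the superlevel set $B \cap \{w > \lambda_k\}$ is covered up to a null set by maximal stopping balls $B_i^{(k)}$ on which the $w$-average over $\sigma B_i^{(k)}$ first exceeds $\lambda_k$; Vitali's covering lemma (which forces a $5$-fold enlargement) produces a pairwise disjoint subfamily with the same covering property. On each such $B_i^{(k)}$ the points at which $w\ge\lambda_{k+1}$ are precisely the points at which $\alpha w\ge w_{\sigma B_i^{(k)}}$ up to a harmless rescaling, so (ii) applies and gives $w(\{w\ge \lambda_{k+1}\}\cap B_i^{(k)}) \le \beta\, w(\sigma B_i^{(k)})$.

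Summing these local estimates requires absorbing the passage from the balls $\sigma B_i^{(k)}$ back into a controlled enlargement of $B$. Each stopping ball lies inside $B$, but its $\sigma$-dilate sits inside a ball of radius at most $5\sigma^2 r(B)$, the factor $5$ coming from Vitali and an extra $\sigma$ arising from the enlargement required by (ii). The doubling estimate \eqref{eq:measureratio} then introduces a loss factor $C_\mu^{\lfloor \log_2(5\sigma^2)\rfloor + 1}$, producing the one-step recursion
\[
w(\{w>\lambda_{k+1}\}\cap B) \le \theta\, w(\{w>\lambda_k\}\cap B), \qquad \theta = C_\mu^{\lfloor \log_2(5\sigma^2)\rfloor +1}\beta.
\]
The hypothesis $\beta < C_\mu^{-\lfloor \log_2(5\sigma^2)\rfloor -1}$ is calibrated exactly so that $\theta<1$, and iteration yields $w(\{w>\lambda_k\}\cap B)\le \theta^k w(\sigma B)$. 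Cavalieri's principle together with this geometric decay produces $\int_B w^p\dmu \le C\,\mu(B)\, w_{\sigma B}^p$ for every $p>1$ with $\alpha^{-(p-1)}\theta < 1$, which is the weak reverse H\"older inequality (i).

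The main obstacle is the geometric bookkeeping in this Calder\'on--Zygmund step: setting up the stopping families so that their $\sigma$-dilates remain in a computable enlargement of $B$, verifying that the $5\sigma^2$ dilation and single extra doubling match the threshold on $\beta$ with no slack, and doing all this without any hypothesis on $(X,d,\mu)$ beyond doubling. This delicate calibration is exactly what forces the precise form of the admissible range of $\beta$ in (ii).
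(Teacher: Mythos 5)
First, a point of comparison: the paper does not prove this lemma at all; it is quoted from \cite[Theorem~4.4]{KinnunenKurkiMudarra2022}, so your attempt can only be measured against that reference. Your direction (i) $\Rightarrow$ (ii) is correct and standard: Chebyshev together with the integrated form of the reverse H\"older inequality gives $w(B\cap\{\alpha w\ge w_{\sigma B}\})\le C^p\alpha^{p-1}w(\sigma B)$, and choosing $\alpha$ small enough puts $\beta=C^p\alpha^{p-1}$ below the threshold (the degenerate case $w_{\sigma B}=0$ is trivial).

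The genuine gap is in (ii) $\Rightarrow$ (i), whose whole content is the one-step decay you display, and the justification you give for it does not work as written. You pass from $\beta\sum_i w(\sigma(5B_i^{(k)}))$ back to $w(\{w>\lambda_k\}\cap B)$ ``by the doubling estimate \eqref{eq:measureratio}'', but \eqref{eq:measureratio} is doubling for $\mu$, not for the weighted measure $w\dmu$; a weight satisfying only a weak condition need not be doubling, so $w$ of a dilated ball cannot be compared to $w$ of the ball by any geometric doubling estimate. The correct route is to use the stopping information twice: the maximality (upper) bound to convert $w(\sigma\tau B_i^{(k)})\le\lambda_k\,\mu(\sigma\tau B_i^{(k)})$ for an admissible enlargement $\tau B_i^{(k)}$, then $\mu$-doubling, then the stopping (lower) bound to convert $\lambda_k\mu(B_i^{(k)})$ back into a $w$-quantity --- and what one recovers is the $w$-measure of the union of the stopping balls, i.e.\ a superlevel set of a restricted maximal function on an enlargement of $B$, not $w(\{w>\lambda_k\}\cap B)$ itself; the recursion therefore has to be run at the maximal-function level and only converted to pointwise superlevel sets at the end via Lebesgue differentiation. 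In addition, since the disjointified balls cover only through their $5$-dilates, condition (ii) must be applied on $5B_i^{(k)}$, whose $\sigma$-dilates must remain admissible, and the inclusion $\{w\ge\lambda_{k+1}\}\cap 5B_i^{(k)}\subset\{\alpha w\ge w_{\sigma 5B_i^{(k)}}\}$ requires an upper bound on $w_{\sigma 5B_i^{(k)}}$, which forces the ratio $\lambda_{k+1}/\lambda_k$ to be a doubling constant times $1/\alpha$, not $1/\alpha$. These are precisely the steps in which the dilation $5\sigma^2$ and the threshold $\beta<C_\mu^{-\lfloor\log_2(5\sigma^2)\rfloor-1}$ are earned; you yourself flag them as ``the main obstacle'' and leave them uncarried out, so the hard implication is asserted rather than proved. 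For a complete argument you should either reproduce or cite the proof of \cite[Theorem~4.4]{KinnunenKurkiMudarra2022}.
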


The following example shows that the parameter $\varepsilon$ in the definition of the weak Gurov--Reshetnyak class indeed needs to be restricted for it to 
imply the weak reverse H\"older inequality.
The example is due to Sawyer in $\mathbb{R}^2$~\cite{sawyer1982}.

\begin{example}
\label{examplesawyer}
Consider $\mathbb{R}^n$ with the Lebesgue measure $\mu$
and
the weak Gurov--Reshetnyak class 
with respect to cubes and $\sigma = 2$.
Let $S = \{x\in \mathbb{R}^n : 0\leq x_n \leq 1 \}$
and
$w=\chi_S$.
By \cite[Example~4.2]{KinnunenKurkiMudarra2022}, we know that
$w$ does not satisfy a weak reverse H\"older inequality.
We have
\[
\frac{w(Q)}{w(2Q)} = \frac{\mu(Q\cap S)}{\mu(2Q \cap S)} \leq \frac{1}{2^{n-1}}
\]
for every cube $Q\subset\mathbb{R}^n$.
It follows that
\begin{align*}
\int_Q (w-w_{2Q})_+ \dmu
&\leq w(Q) 
= \frac{w(Q)}{w(2Q)}  w(2Q)
\leq \frac{1}{2^{n-1}} w(2Q) .
\end{align*}
Thus, $w\in WGR_\varepsilon$ with $\varepsilon=2^{1-n}$.
This shows that for the weak Gurov--Reshetnyak condition to imply the weak reverse H\"older inequality, the parameter $\varepsilon$ needs to be restricted,
at least smaller than $2^{1-n}$.
Compare to Theorem~\ref{reshetnyak} below where we have the restriction $\varepsilon < 1 /(C_\mu (5\sigma)^D e )$.
\end{example}

The following theorem shows that a weak Gurov--Reshetnyak condition with the negative part of the oscillation is equivalent 
with 
sublevel set measure condition \eqref{measurecond_not_weakA}.
We know that this sublevel set condition
does not characterize the weak reverse H\"older inequality, see (*b) in~\cite[p.~2283]{KinnunenKurkiMudarra2022}.
Thus, we see that the 
correct definition of the weak Gurov--Reshetnyak class related to the weak reverse H\"older inequality only has
the positive part of the oscillation.

\begin{theorem}
\label{GRneg}
Let $B\subset X$ be a ball and $w$ be a nonnegative locally integrable function.
\begin{enumerate}[(i)]
\item 
Assume that there exists $0 < \varepsilon < 1$ such that
\[
\dashint_{B} (w - w_{\sigma B})_- \dmu \leq \varepsilon w_{\sigma B} .
\]
Then for $\varepsilon < \lambda < 1 $ we have
\[
\mu( B \cap \{ w \leq (1- \tfrac{\varepsilon}{\lambda} ) w_{\sigma B} \}) \leq \lambda \mu(B) .
\]

\item
Assume that there exist $\alpha,\beta \in (0,1)$ such that
\begin{equation}
\label{measurecond_not_weakA}
\mu( B \cap \{ w \leq \beta w_{\sigma B} \} ) \leq \alpha \mu(B) .
\end{equation}
Then we have
\[
\dashint_{B} (w - w_{\sigma B})_- \dmu \leq (1-(1-\alpha) \beta) w_{\sigma B} .
\]

\end{enumerate}
\end{theorem}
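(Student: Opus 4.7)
The plan is to mirror the proof of Theorem~\ref{weakGRequiv}, with two systematic substitutions: replace the positive part $(w-w_{\sigma B})_+$ by the negative part $(w-w_{\sigma B})_-$, and replace the weighted measure $w$ together with superlevel sets $\{\alpha w \geq w_{\sigma B}\}$ by the underlying measure $\mu$ together with sublevel sets $\{w \leq \beta w_{\sigma B}\}$. Both parts then reduce to a Chebyshev-style estimate on a single auxiliary set, just as in Theorem~\ref{weakGRequiv}. Throughout, one may assume $w_{\sigma B}>0$, since otherwise both statements are trivial.

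For part (i), I would set $E = B \cap \{w \leq (1 - \tfrac{\varepsilon}{\lambda}) w_{\sigma B}\}$. On $E$ we have the pointwise bound $(w - w_{\sigma B})_- = w_{\sigma B} - w \geq \tfrac{\varepsilon}{\lambda}\, w_{\sigma B}$, so enlarging the domain from $E$ to $B$ and rewriting the hypothesis without the average gives
\[
\tfrac{\varepsilon}{\lambda}\, w_{\sigma B}\, \mu(E) \leq \int_E (w - w_{\sigma B})_- \dmu \leq \int_B (w - w_{\sigma B})_- \dmu \leq \varepsilon w_{\sigma B}\, \mu(B).
\]
Dividing by $\tfrac{\varepsilon}{\lambda}\, w_{\sigma B}$ yields $\mu(E) \leq \lambda \mu(B)$, which is the claim.

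For part (ii), I would set $E = B \cap \{w \leq \beta w_{\sigma B}\}$ and decompose
\[
\int_B (w - w_{\sigma B})_- \dmu = \int_{B \cap \{\beta w_{\sigma B} < w < w_{\sigma B}\}} (w_{\sigma B} - w) \dmu + \int_E (w_{\sigma B} - w) \dmu.
\]
On the first region the pointwise bound is $(w_{\sigma B}-w) \leq (1-\beta) w_{\sigma B}$, while on $E$ one has $(w_{\sigma B}-w) \leq w_{\sigma B}$. Combining these with $\mu(E) \leq \alpha \mu(B)$ and the identity $(1-\beta)\mu(B\setminus E) + \mu(E) = (1-\beta)\mu(B) + \beta \mu(E)$ should give
\[
\int_B (w - w_{\sigma B})_- \dmu \leq \bigl((1-\beta) + \alpha\beta\bigr) w_{\sigma B}\, \mu(B) = (1-(1-\alpha)\beta)\, w_{\sigma B}\, \mu(B),
\]
and dividing by $\mu(B)$ produces the stated inequality.

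The argument is essentially a transcription of Theorem~\ref{weakGRequiv} to the dual setting, so there is no serious obstacle. The only things worth double-checking are the algebraic identity used to combine the two contributions in (ii) into the advertised constant, and the bookkeeping between $\int_B$ and $\dashint_B$, since the hypothesis of (i) and the conclusion of (ii) are stated with averages while the natural Chebyshev estimates are most cleanly performed with unnormalized integrals.
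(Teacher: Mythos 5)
Your argument is correct and is essentially the paper's own proof: part (i) is the same Chebyshev estimate on $E = B \cap \{w \leq (1-\tfrac{\varepsilon}{\lambda})w_{\sigma B}\}$ (the paper phrases it via $\tfrac{\varepsilon}{\lambda}w_{\sigma B} \leq \inf_E (w_{\sigma B}-w) \leq \dashint_E (w_{\sigma B}-w)\dmu$, which is the same computation in normalized form), and part (ii) uses the identical splitting of $B \cap \{w < w_{\sigma B}\}$ into $\{\beta w_{\sigma B} < w < w_{\sigma B}\}$ and $E$, with the same pointwise bounds and the same algebraic identity $(1-\beta)\mu(E^c)+\mu(E) = (1-\beta)\mu(B)+\beta\mu(E)$. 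The only caveat, shared implicitly by the paper's proof as well, is the degenerate case $w_{\sigma B}=0$ (and $\mu(E)=0$ in (i)), which needs $w\equiv 0$ a.e.\ on $\sigma B$ and is best simply excluded rather than called trivial.
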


\begin{proof}
We first show that (i) holds.
Let $E = B \cap \{ w \leq (1- \tfrac{\varepsilon}{\lambda} ) w_{\sigma B} \}$.
We obtain
\begin{align*}
\frac{\varepsilon}{\lambda} w_{\sigma B} &\leq \inf_{E} (w_{\sigma B} - w) \leq \dashint_{E} (w_{\sigma B} - w) \dmu \leq \frac{1}{\mu(E)} \int_{B \cap \{ w \leq w_{\sigma B} \}} (w_{\sigma B} - w) \dmu \\
& = \frac{1}{\mu(E)} \int_{B} (w_{\sigma B} - w)_+ \dmu \leq \frac{\mu(B)}{\mu(E)} \varepsilon w_{\sigma B} ,
\end{align*}
which implies that $\mu(E) \leq \lambda \mu(B) $.

For the other direction, we set $E = B \cap \{ w \leq \beta w_{\sigma B} \}$
and $E^c = B \setminus E = B \cap \{ w > \beta w_{\sigma B} \}$.
Then $\mu(E) \leq \alpha \mu(B)$ and it holds that
\begin{align*}
\dashint_{B} (w - w_{\sigma B})_- \dmu &= \frac{1}{\mu(B)} \int_{ B \cap \{ w < w_{\sigma B} \}} (w_{\sigma B} - w) \dmu \\
&= \frac{1}{\mu(B)} \int_{B \cap \{ \beta w_{\sigma B} < w < w_{\sigma B} \}} (w_{\sigma B} - w) \dmu + \frac{1}{\mu(B)} \int_{E} (w_{\sigma B} - w) \dmu \\
&\leq \frac{\mu(E^c)}{\mu(B)} (1-\beta) w_{\sigma B} + \frac{\mu(E)}{\mu(B)} w_{\sigma B} \\
&= \frac{w_{\sigma B}}{\mu(B)} ( (1-\beta) \mu(B) + \beta \mu(E) ) \\
&\leq \frac{w_{\sigma B}}{\mu(B)}  ( (1-\beta) \mu(B) + \beta \alpha \mu(B) ) \\
&= (1-(1-\alpha) \beta) w_{\sigma B} .
\end{align*}
This completes the proof.
\end{proof}

\section{John--Nirenberg lemma for weak Gurov--Reshetnyak class}

Throughout the argument, let 
$\sigma\geq1$,
$\eta > 0$ and $B_0 = B(x_{B_0},r_{B_0}) \subset X$ be fixed. We denote
$\widehat{B}_0 = (1+\eta) B_0$,
\[
\mathcal{B} = \{B(x_B,r_B) : x_B \in B_0, r_B \leq \eta r_{B_0} \}
\]
and
\begin{equation}
\label{alpha}
\alpha = C_\mu^2 (5\sigma)^D \biggl( 1 + \frac{1}{\eta} \biggr)^D ,
\end{equation}
where recall that $C_\mu$ is the doubling constant and $D = \log_2 C_\mu$ is the doubling dimension.
We define a maximal function
\[
M_{\mathcal{B}}f(x) = \sup_{\substack{B \ni x \\ B \in \mathcal{B}}} \dashint_{B} \lvert f \rvert \dmu
\]
with the understanding that $M_\mathcal{B} f(x) = 0$ if there is no ball $B \in \mathcal{B}$ such that $x \in B$. 
In particular, $M_\mathcal{B} f(x) = 0$ for every $x \in X \setminus \widehat{B}_0$. By the Lebesgue differentiation theorem~\cite[p.~4]{heinonen},
we have $|f(x)| \leq M_{\mathcal{B}}f(x)$ for $\mu$-almost every $x \in B_0$.
Moreover, denote 
\[
E_\lambda = \{ x \in \widehat{B}_0 : M_{\mathcal{B}}f(x) > \lambda \} .
\]

The following lemma is a Calder\'{o}n--Zygmund decomposition in metric measure spaces with a doubling measure. See~\cite{BerkovitsKinnunenMartell, myyrylainen} for the proof.

\begin{lemma}
\label{calderon}
Let $f\geq 0$ be an integrable function defined on $\widehat{B}_0$.
Assume that $E_\lambda \neq \emptyset $ and
\begin{equation*}
\alpha f_{\widehat{B}_0} \leq \lambda,
\end{equation*}
where $\alpha$ is given in~\eqref{alpha}.
Then there exist countably many pairwise disjoint balls $B_i \in \mathcal{B} $ such that
\begin{enumerate}[(i),topsep=5pt,itemsep=5pt]
\item $ \bigcup_i B_i \subset E_\lambda \subset \bigcup_i 5B_i $,

\item $r_{B_i} \leq \frac{\eta}{5\sigma} r_{B_0}$,

\item $ f_{B_i} > \lambda$,

\item $ f_{\tau B_i} \leq \lambda$ whenever $ \tau \geq 2$ and $\tau B_i \in \mathcal{B}$.

\end{enumerate}
The collection of balls $\{B_i\}_i$ is called the Calder\'{o}n--Zygmund balls $B_{i,\lambda}$ at level $\lambda$. Furthermore, if $\alpha f_{\widehat{B}_0} \leq \lambda' \leq \lambda$, then it is possible to choose Calder\'{o}n--Zygmund balls $ B_{j,\lambda'} $ at level $\lambda'$ in a manner that for each $B_{i,\lambda}$ we can find $B_{j, \lambda'}$ such that
$
B_{i,\lambda} \subset 5 B_{j,\lambda'} .
$

\end{lemma}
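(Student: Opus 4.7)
The plan is to carry out the classical Calder\'{o}n--Zygmund maximal-function construction inside the restricted ball family $\mathcal{B}$. For every point of $E_\lambda$ I would pick a nearly maximal ball in $\mathcal{B}$ on which the average of $f$ still exceeds $\lambda$, and then thin the resulting cover by the Vitali $5B$ covering lemma. The normalization $\alpha f_{\widehat{B}_0} \leq \lambda$ is exactly what keeps the selected balls from being large relative to $B_0$.

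The first step is the radius bound~\emph{(ii)}. For any $B \in \mathcal{B}$ the center $x_B$ lies in $B_0$, so $B \subset \widehat{B}_0$; then~\eqref{eq:measureratio} gives
\[
\frac{\mu(\widehat{B}_0)}{\mu(B)} \leq C_\mu^2 \left( \frac{(1+\eta) r_{B_0}}{r_B} \right)^D,
\]
and combining this with $\alpha f_{\widehat{B}_0} \leq \lambda$ forces $f_B < \lambda$ whenever $r_B > \frac{\eta}{5\sigma} r_{B_0}$. Next, for each $x \in E_\lambda$ I set
\[
r(x) = \sup\{ r_B : B \in \mathcal{B}, \ x \in B, \ f_B > \lambda \}
\]
and choose $B_x \in \mathcal{B}$ with $x \in B_x$, $f_{B_x} > \lambda$, and $r_{B_x} > r(x)/2$. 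Because the family $\{B_x\}_{x \in E_\lambda}$ has uniformly bounded radii, the $5B$ covering lemma produces a pairwise disjoint subfamily $\{B_i\}$ with $\bigcup_x B_x \subset \bigcup_i 5 B_i$, yielding~\emph{(i)}--\emph{(iii)}. Property~\emph{(iv)} follows by contradiction: writing $B_i = B_{x_i}$, if $\tau \geq 2$, $\tau B_i \in \mathcal{B}$, and $f_{\tau B_i} > \lambda$, then $\tau B_i$ would be an admissible competitor in the supremum defining $r(x_i)$, so $\tau r_{B_i} \leq r(x_i) < 2 r_{B_i}$, contradicting $\tau \geq 2$.

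The delicate part is the final nesting assertion at levels $\lambda' \leq \lambda$, and this is where I expect the main obstacle. The key observation is that every $B_{i,\lambda}$ is itself a valid level-$\lambda'$ competitor since $f_{B_{i,\lambda}} > \lambda \geq \lambda'$, so at its defining point $x_i$ the level-$\lambda'$ supremum $r'(x_i)$ satisfies $r'(x_i) \geq r_{B_{i,\lambda}}$. I would tailor the level-$\lambda'$ selection at each such $x_i$ to ensure both near-maximality and $r_{B_{x_i,\lambda'}} \geq r_{B_{i,\lambda}}$: when $r'(x_i) \geq 2 r_{B_{i,\lambda}}$ the standard choice $r_{B_{x_i,\lambda'}} > r'(x_i)/2$ already does this, and when $r'(x_i) < 2 r_{B_{i,\lambda}}$ I simply take $B_{x_i,\lambda'} = B_{i,\lambda}$ itself. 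A greedy $5B$ extraction in decreasing order of radius then forces $B_{x_i,\lambda'}$ either to be selected or to be blocked by a previously selected ball $B_{j,\lambda'}$ of radius at least $r_{B_{i,\lambda}}$, and in both cases the standard geometric estimate $B_{i,\lambda} \subset 5 B_{j,\lambda'}$ follows. The main check is that this reorganized selection still yields level-$\lambda'$ Calder\'{o}n--Zygmund balls satisfying~\emph{(i)}--\emph{(iv)}, which is inherited from the fact that each selected ball remains near-maximal for some point it contains.
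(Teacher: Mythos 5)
Your construction of the balls at a single level is correct and is the standard route (the paper itself only refers to \cite{BerkovitsKinnunenMartell, myyrylainen} for the proof): the normalization $\alpha f_{\widehat{B}_0}\leq\lambda$ together with \eqref{eq:measureratio} shows that any $B\in\mathcal{B}$ with $f_B>\lambda$ has $r_B<\tfrac{\eta}{5\sigma}r_{B_0}$, and selecting for each $x\in E_\lambda$ a ball with $f_{B_x}>\lambda$ and radius more than half of the supremum $r(x)$, followed by the $5r$-covering lemma, gives (i)--(iv) exactly as you argue.

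The gap is in the two-level statement, i.e.\ precisely where you anticipated trouble, and your proposed mechanism does not close it. First, a ``greedy $5B$ extraction in decreasing order of radius'' in which every discarded ball is blocked by a previously selected ball of radius at least as large is not available in general: suprema of radii need not be attained (think of a nested family $B(0,1-1/n)$), and the standard $5r$-covering selection only guarantees a blocking ball of radius greater than \emph{half} the radius of the blocked ball, hence in particular only $>\tfrac12 r_{B_{i,\lambda}}$, not $\geq r_{B_{i,\lambda}}$. Second, and more importantly, even if the blocking ball $B_{j,\lambda'}$ had radius $\geq r_{B_{i,\lambda}}$, the containment $B_{i,\lambda}\subset 5B_{j,\lambda'}$ would not follow, because $B_{i,\lambda}$ need not intersect $B_{j,\lambda'}$: you must pass through the candidate $B_{x_i,\lambda'}$ of radius $\rho_i$, and the triangle inequality only gives points of $B_{i,\lambda}$ within distance $2r_{B_{i,\lambda}}+2\rho_i+r_{B_{j,\lambda'}}$ of the center of $B_{j,\lambda'}$, which with the available bounds yields something like $B_{i,\lambda}\subset 9B_{j,\lambda'}$ (or $7B_{j,\lambda'}$ under your unproved radius claim), not $5B_{j,\lambda'}$; to squeeze out $5$ one needs $r_{B_{j,\lambda'}}\geq\rho_i$, which no covering-lemma selection provides. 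The constant $5$ is not cosmetic here: the proof of Theorem~\ref{reshetnyak} packs the disjoint $B_{i,\lambda}$ into $5B_{j,\lambda'}$ and uses $5\sigma B_{j}\in\mathcal{B}$ via (ii), with $\alpha$ in \eqref{alpha} calibrated to exactly this dilation. A standard repair is to change the candidate attached to $B_{i,\lambda}$: let $S_i=\sup\{r_C: C\in\mathcal{B},\ B_{i,\lambda}\subset C,\ f_C>\lambda'\}$ (finite by the radius bound, and $\geq r_{B_{i,\lambda}}$ since $f_{B_{i,\lambda}}>\lambda\geq\lambda'$), and pick $C_i\in\mathcal{B}$ with $B_{i,\lambda}\subset C_i$, $f_{C_i}>\lambda'$ and $r_{C_i}>S_i/2$. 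Then (iv) holds for $C_i$ because any $\tau C_i\in\mathcal{B}$ with $\tau\geq2$ still contains $B_{i,\lambda}$ and has radius exceeding $S_i$, and applying the $5r$-covering lemma to the family consisting of these $C_i$ together with the usual near-maximal candidates at the remaining points of $E_{\lambda'}$ gives $B_{i,\lambda}\subset C_i\subset 5B_{j,\lambda'}$ directly, while (i)--(iii) are inherited as before.
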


The following theorem is the John--Nirenberg lemma for the weak Gurov--Reshetnyak class.

\begin{theorem}
\label{reshetnyak}
Let $0<\varepsilon<1$.
Assume that $w \in WGR_{\varepsilon,\sigma}(\sigma \widehat{B}_0)$.
Then there exists a constant $C = C(C_\mu, \sigma, \eta)$ such that
\begin{align*}
\mu(\{ x \in B_0: ( w-w_{\sigma \widehat{B}_0} )_+ > \lambda w_{\sigma \widehat{B}_0} \}) \leq \bigg( \frac{1}{1+\lambda} \bigg)^{1/(A\varepsilon)} \frac{C}{ \varepsilon w_{\sigma \widehat{B}_0}} \int_{\widehat{B}_0} ( w-w_{\sigma \widehat{B}_0} )_+ \dmu 
\end{align*}
for every $\lambda\geq \lambda_0 = \alpha C_\mu \sigma^D \varepsilon$, where $\alpha$ is given in~\eqref{alpha}.

\end{theorem}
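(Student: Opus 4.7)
The plan is to establish the decay by iterating the Calderón--Zygmund decomposition (Lemma~\ref{calderon}) along a geometrically growing sequence of levels $\lambda_k$ and extracting, at each step, a fixed-ratio shrinkage of the CZ-ball measure from the weak Gurov--Reshetnyak condition. Since WGR is homogeneous in $w$, I first normalize $w_{\sigma\widehat{B}_0}=1$ and set $f=(w-1)_+$, so the target reduces to bounding $\mu(\{f>\lambda\}\cap B_0)$ for $\lambda\geq\lambda_0$.

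I take $\lambda_0$ as in the statement and inductively set $\lambda_{k+1}-\lambda_k = c\varepsilon(1+\lambda_k)$ for a constant $c=eC_1$, where $C_1=C_1(C_\mu,\sigma)$ comes from the doubling ratios below. With $r=1+c\varepsilon$, this yields the geometric identity $1+\lambda_k=r^k(1+\lambda_0)$. At each level I apply Lemma~\ref{calderon} to $f$ on $\widehat{B}_0$; the hypothesis $\alpha f_{\widehat{B}_0}\leq\lambda_k$ follows from WGR on $\widehat{B}_0$ once $\lambda_k\geq\lambda_0$. This produces disjoint CZ balls $\{B_{i,k}\}$ with $f_{B_{i,k}}>\lambda_k$; since $5\sigma\geq 2$ and $5\sigma B_{i,k}\in\mathcal{B}$ by property~(ii), property~(iv) gives $f_{5\sigma B_{i,k}}\leq\lambda_k$, hence $w_{5\sigma B_{i,k}}\leq 1+\lambda_k$. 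The compatibility clause at the end of Lemma~\ref{calderon} moreover lets me arrange the balls so that each $B_{j,k+1}$ lies in some $5B_{i,k}$.

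The central step is to apply WGR on the enlarged ball $5B_{i,k}$; a triangle-inequality check shows $5\sigma B_{i,k}\subset\widehat{B}_0\subset\sigma\widehat{B}_0$, so Definition~\ref{weakGR} is available there. Using $w_{5\sigma B_{i,k}}\leq 1+\lambda_k$ and the pointwise inequality $(w-1-\lambda_k)_+\leq (w-w_{5\sigma B_{i,k}})_+$ I obtain
\[
\int_{5B_{i,k}}(f-\lambda_k)_+\dmu \leq \varepsilon w(5\sigma B_{i,k}) \leq C_1\varepsilon(1+\lambda_k)\mu(B_{i,k}),
\]
with $C_1$ absorbing the doubling ratio $\mu(5\sigma B_{i,k})/\mu(B_{i,k})$. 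Since $f_{B_{j,k+1}}>\lambda_{k+1}$ gives the lower bound $(\lambda_{k+1}-\lambda_k)\mu(B_{j,k+1})\leq \int_{B_{j,k+1}}(f-\lambda_k)_+\dmu$, summing the disjoint $\{B_{j,k+1}\}_j$ and using the nesting $B_{j,k+1}\subset 5B_{i(j),k}$ yields
\[
(\lambda_{k+1}-\lambda_k)\,a_{k+1} \leq C_1\varepsilon(1+\lambda_k)\,a_k, \qquad a_k:=\sum_i \mu(B_{i,k}).
\]
My calibration of the increment then gives $a_{k+1}\leq e^{-1}a_k$, so $a_k\leq e^{-k}a_0$.

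To finish, the geometric law $1+\lambda_k=r^k(1+\lambda_0)$ together with $\log r\leq c\varepsilon=A\varepsilon$ turns $e^{-k}$ into $\bigl((1+\lambda_0)/(1+\lambda_k)\bigr)^{1/\log r}\leq\bigl((1+\lambda_0)/(1+\lambda_k)\bigr)^{1/(A\varepsilon)}$, since the base is less than one and larger exponents give smaller values. The cover $\{f>\lambda_k\}\cap B_0\subset\bigcup_i 5B_{i,k}$ and doubling convert $a_k$ into a bound on the measure of the level set, while $a_0\leq \lambda_0^{-1}\int_{\widehat{B}_0}f\,\dmu$ and $\lambda_0=\alpha C_\mu\sigma^D\varepsilon$ supply the $1/(\varepsilon w_{\sigma\widehat{B}_0})$ prefactor after undoing the normalization. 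For $\lambda\in[\lambda_k,\lambda_{k+1})$ the monotonicity of the distribution function fills in the continuous bound at the cost of the bounded factor $r^{1/(A\varepsilon)}\leq e$. The main obstacle is calibrating the increment $\lambda_{k+1}-\lambda_k\sim\varepsilon(1+\lambda_k)$ so that the per-step decay factor $e^{-1}$ translates cleanly into the target power $(1+\lambda)^{-1/(A\varepsilon)}$; this balance is precisely what fixes $A$ in terms of $C_\mu$ and $\sigma$.
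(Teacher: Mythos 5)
Your proposal is correct and follows essentially the same route as the paper: a Calder\'on--Zygmund decomposition at levels $\lambda_{k+1}-\lambda_k = A\varepsilon(1+\lambda_k)$ with $A=C_\mu(5\sigma)^De$, the WGR condition applied on $5B_{j}$ together with Lemma~\ref{calderon}~(iv) on $5\sigma B_j$ to get the per-step decay $1/e$, and the geometric identity $1+\lambda_k=(1+A\varepsilon)^k(1+\lambda_0)$ to convert this into the $(1+\lambda)^{-1/(A\varepsilon)}$ bound. The only points left implicit, both routine and handled in the paper, are that the compatible chain of decompositions should be built downward from the top level $\lambda_m$ (the compatibility clause chooses the lower-level balls given the higher-level ones) and that $(1+\lambda_0)^{1/(A\varepsilon)}\leq e^{\alpha/(5^De)}$, which keeps the final constant depending only on $C_\mu,\sigma,\eta$.
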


\begin{proof}

We have
\begin{align*}
\dashint_{\widehat{B}_0} ( w-w_{\sigma \widehat{B}_0} )_+ \dmu \leq \varepsilon \frac{w(\sigma \widehat{B}_0)}{\mu(\widehat{B}_0)} \leq C_\mu \sigma^D \varepsilon w_{\sigma \widehat{B}_0} \leq  \frac{\lambda}{\alpha} w_{\sigma \widehat{B}_0} 
\end{align*}
for $\lambda \geq \lambda_0 = \alpha C_\mu \sigma^D \varepsilon$,
since
\[
\frac{\mu(\sigma \widehat{B}_0)}{\mu(\widehat{B}_0)} \leq C_\mu \sigma^D .
\]
Thus, we may apply Lemma~\ref{calderon}. Take $\lambda > \delta \geq \lambda_0 $ and form collections of Calder\'{o}n--Zygmund balls $\{B_i\}_i$ and $\{B_j\}_j$ for $( w-w_{\sigma \widehat{B}_0} )_+$ at levels $\lambda w_{\sigma \widehat{B}_0} $ and $\delta w_{\sigma \widehat{B}_0}$ such that each $B_i$ is contained in some $5B_j$.
Denote
\[
\mathcal{I}_j = \biggl\{ i \in \mathbb{N}: B_{i} \subset 5B_{j}, i \notin \bigcup_{k=1}^{j-1} \mathcal{I}_k \biggr\}
\]
for every $j \in \mathbb{N}$.
We have $5 \sigma B_j \in \mathcal{B}$ due to $5\sigma r_{B_j} \leq 5\sigma \frac{\eta}{5\sigma} r_{B_0} = \eta r_{B_0}$
by Lemma~\ref{calderon}~(ii).
Lemma~\ref{calderon}~(iv) then implies
\[
( w_{5 \sigma B_j}-w_{\sigma \widehat{B}_0} )_+ \leq \dashint_{5 \sigma B_j} (w-w_{\sigma \widehat{B}_0})_+ \dmu \leq \delta w_{\sigma \widehat{B}_0} .
\]
Moreover,
Lemma~\ref{calderon}~(iii) gives
\begin{align*}
\lambda w_{\sigma \widehat{B}_0} \sum_{i=1}^\infty \mu(B_i) \leq \sum_{i=1}^\infty \int_{B_i} ( w-w_{\sigma \widehat{B}_0} )_+ \dmu = \sum_{j=1}^\infty \sum_{i \in \mathcal{I}_j} \int_{B_i} ( w-w_{\sigma \widehat{B}_0} )_+ \dmu .
\end{align*}
For each fixed $j\in\mathbb{N}$, we estimate
\begin{align*}
\sum_{i \in \mathcal{I}_j} \int_{B_i} ( w-w_{\sigma \widehat{B}_0} )_+ \dmu &\leq \sum_{i \in \mathcal{I}_j} \int_{B_i} ( w-w_{5 \sigma B_j} )_+ \dmu + \sum_{i \in \mathcal{I}_j} \int_{B_i} ( w_{5 \sigma B_j}-w_{\sigma \widehat{B}_0} )_+ \dmu \\
&\leq \int_{5B_j} ( w-w_{5 \sigma B_j} )_+ \dmu + \delta w_{\sigma \widehat{B}_0} \sum_{i \in \mathcal{I}_j} \mu( B_i ) \\
&\leq \varepsilon w(5\sigma B_j) + \delta w_{\sigma \widehat{B}_0} \sum_{i \in \mathcal{I}_j} \mu( B_i ) \\
&\leq \varepsilon (1+\delta) w_{\sigma \widehat{B}_0} \mu(5\sigma B_j) + \delta w_{\sigma \widehat{B}_0} \sum_{i \in \mathcal{I}_j} \mu( B_i ) \\
&\leq C_\mu (5\sigma)^D \varepsilon (1+\delta) w_{\sigma \widehat{B}_0} \mu(B_j) + \delta w_{\sigma \widehat{B}_0} \sum_{i \in \mathcal{I}_j} \mu( B_i ) ,
\end{align*}
where in the second last inequality we used
\[
w_{5 \sigma B_j} \leq \dashint_{5 \sigma B_j} (w-w_{\sigma \widehat{B}_0})_+ \dmu + w_{\sigma \widehat{B}_0} \leq (1+\delta) w_{\sigma \widehat{B}_0} .
\]
By summing over $j\in\mathbb{N}$, we obtain
\[
\lambda w_{\sigma \widehat{B}_0} \sum_{i=1}^\infty \mu(B_i) \leq C_\mu (5\sigma)^D \varepsilon (1+\delta) w_{\sigma \widehat{B}_0} \sum_{j=1}^\infty \mu(B_j) + \delta w_{\sigma \widehat{B}_0} \sum_{i=1}^\infty \mu(B_i) .
\]
Thus, we have
\begin{equation}
\label{goodla1}
\sum_{i=1}^\infty \mu(B_i) \leq \frac{C_\mu (5\sigma)^D \varepsilon (1+\delta)}{\lambda - \delta} \sum_{j=1}^\infty \mu(B_j) 
\end{equation}
for $\lambda > \delta \geq \lambda_0 $.

Denote $S_\lambda = \bigcup_{i=1}^\infty B_i $ and $S_\delta = \bigcup_{j=1}^\infty B_j$, 
where the collections of balls $\{B_i\}_i$ and $\{B_j\}_j$ are as in the beginning of the proof.
Let
\[
\varphi(\delta) = (A \varepsilon + 1) \delta + A \varepsilon ,
\]
where $A = C_\mu (5\sigma)^D e$.
We recursively define $\lambda_{m+1} = \varphi(\lambda_m)$ for $m\in\mathbb{N}$ and recall that $\lambda_0 = \alpha C_\mu \sigma^D \varepsilon$.
Fix $m\in\mathbb{N}$.
We apply Lemma~\ref{calderon} first to construct $S_{\lambda_{m}}$ and then to recursively construct
$S_{\lambda_{k}}$ based on $S_{\lambda_{k+1}}$ for every $k\in\{0,1,\dots,m-1\}$.
Then by~\eqref{goodla1} we have
\begin{equation}
\label{goodla2}
\mu(S_{\lambda_{k+1}}) \leq \frac{C_\mu (5\sigma)^D \varepsilon (1+\lambda_k)}{\lambda_{k+1} - \lambda_k} \mu(S_{\lambda_k})  = \frac{1}{e} \mu(S_{\lambda_k}) 
\end{equation}
for every $k\in\{0,1,\dots,m-1\}$.
We make some observations.
Since $(c_0 \varepsilon + 1)^{1/(A\varepsilon)}$, $c_0 >0$, is decreasing as a function of $\varepsilon$, we see that
\[
(A \varepsilon + 1)^{1/(A\varepsilon)} \leq \lim_{\varepsilon \to 0} (A \varepsilon + 1)^{1/(A\varepsilon)} = e^1
\]
and
\[
(\lambda_0 + 1)^{1/(A\varepsilon)} \leq \lim_{\varepsilon \to 0} (\alpha C_\mu \sigma^D \varepsilon + 1)^{1/(A\varepsilon)} = e^{\alpha/(5^D e)} .
\]
Thus, it holds that
\begin{equation}
\label{lambda_m_estimate}
\biggl( \frac{1}{1+\lambda_k} \biggr)^{1/(A\varepsilon)} = (A \varepsilon + 1)^{1/(A\varepsilon)} \biggl( \frac{1}{1+\lambda_{k+1}} \biggr)^{1/(A\varepsilon)} \leq e \biggl( \frac{1}{1+\lambda_{k+1}} \biggr)^{1/(A\varepsilon)} 
\end{equation}
for every $k\in\{0,1,\dots,m\}$.

We claim that
\begin{equation}
\label{recurs}
\mu(S_{\lambda_k}) \leq C_0  \biggl( \frac{1}{1+\lambda_{k+1}} \biggr)^{1/(A\varepsilon)} \mu(S_{\lambda_0})
\end{equation}
for every $k\in\{0,1,\dots,m\}$, where $C_0 = e^{1+\alpha/(5^D e)}$.
We prove the claim by induction.
First, note that the claim holds for $k=0$ since
\begin{align*}
\mu(S_{\lambda_0}) &= (A \varepsilon + 1)^{1/(A\varepsilon)} (\lambda_0 + 1)^{1/(A\varepsilon)} \biggl( \frac{1}{1+\lambda_1} \biggr)^{1/(A\varepsilon)}  \mu(S_{\lambda_0}) \\
&\leq e^{1+\alpha/(5^D e)} \biggl( \frac{1}{1+\lambda_1} \biggr)^{1/(A\varepsilon)} \mu(S_{\lambda_0}) = C_0 \biggl( \frac{1}{1+\lambda_1} \biggr)^{1/(A\varepsilon)} \mu(S_{\lambda_0}) .
\end{align*}
Assume then that~\eqref{recurs} holds for some $k\in\{0,1,\dots,m-1\}$.
We show that this implies~\eqref{recurs} for $k+1$. By~\eqref{goodla2} and~\eqref{lambda_m_estimate}, we get
\begin{align*}
\mu(S_{\lambda_{k+1}}) &\leq \frac{1}{e} \mu(S_{\lambda_{k}}) \leq \frac{C_0}{e} \bigg( \frac{1}{1+\lambda_{k+1}} \bigg)^{1/(A\varepsilon)} \mu(S_{\lambda_0}) \leq C_0 \bigg( \frac{1}{1+\lambda_{k+2}} \bigg)^{1/(A\varepsilon)} \mu(S_{\lambda_0}) .
\end{align*}
Hence, \eqref{recurs} holds for $k+1$.

Let $m\in\mathbb{N}$ such that
$\lambda_m \leq \lambda < \lambda_{m+1}$.
Denote by $B_{j,\lambda_m}$ the Calder\'{o}n--Zygmund balls at level $\lambda_m w_{\sigma \widehat{B}_0}$
and by $B_{i,\lambda_0}$ the Calder\'{o}n--Zygmund balls at level $\lambda_0 w_{\sigma \widehat{B}_0}$.
The previous estimate together with Lemma~\ref{calderon}~(i), (iii) implies
\begin{align*}
\mu(\{ x \in B_0: ( w-w_{\sigma \widehat{B}_0} )_+ > \lambda w_{\sigma \widehat{B}_0} \}) &\leq \mu(\{ x \in B_0: ( w-w_{\sigma \widehat{B}_0} )_+ > \lambda_m w_{\sigma \widehat{B}_0} \}) \\ &\leq \sum_{j=1}^\infty \mu(5B_{j,\lambda_m}) \leq C_\mu^3 \sum_{j=1}^\infty \mu(B_{j,\lambda_m}) = C_\mu^3 \mu(S_{\lambda_m}) \\
&\leq C_\mu^3 C_0 \bigg( \frac{1}{1+\lambda_{m+1}} \bigg)^{1/(A\varepsilon)} \mu(S_{\lambda_0}) \\
&\leq C_\mu^3 C_0 \bigg( \frac{1}{1+\lambda} \bigg)^{1/(A\varepsilon)} \sum_{i=1}^\infty \mu(B_{i,\lambda_0}) \\
&\leq C_\mu^3 C_0 \bigg( \frac{1}{1+\lambda} \bigg)^{1/(A\varepsilon)} \frac{1}{\lambda_0 w_{\sigma \widehat{B}_0}} \sum_{i=1}^\infty \int_{B_{i,\lambda_0}} ( w-w_{\sigma \widehat{B}_0} )_+ \dmu \\
&\leq C_\mu^3 C_0 \bigg( \frac{1}{1+\lambda} \bigg)^{1/(A\varepsilon)} \frac{1}{\alpha C_\mu \sigma^D \varepsilon w_{\sigma \widehat{B}_0}} \int_{\widehat{B}_0} ( w-w_{\sigma \widehat{B}_0} )_+ \dmu \\
&= \bigg( \frac{1}{1+\lambda} \bigg)^{1/(A\varepsilon)} \frac{C}{ \varepsilon w_{\sigma \widehat{B}_0}} \int_{\widehat{B}_0} ( w-w_{\sigma \widehat{B}_0} )_+ \dmu ,
\end{align*}
where $C = C_\mu^2 C_0 / (\alpha \sigma^D)$.
This completes the proof.
\end{proof}

As a consequence of the preceding theorem, we obtain a self-improving phenomenon for the weak Gurov--Reshetnyak class.
In particular, functions in the weak Gurov--Reshetnyak class are integrable up to a power $p$ depending on $\varepsilon$.

\begin{corollary}
\label{oscRHI}
Let $0<\varepsilon<\frac{1}{2A}$,
where $A = C_\mu (5\sigma)^D e$.
Assume that $w \in WGR_{\varepsilon,\sigma}(\sigma\widehat{B}_0)$. 
Then for every $1<p\leq\frac{1}{2A\varepsilon}$ there exists a constant $C=C(C_\mu, \sigma, \eta, p)$ such that
\[
\int_{B_0} ( w-w_{\sigma \widehat{B}_0} )^p_+ \dmu \leq C \varepsilon^{p-1} (w_{\sigma \widehat{B}_0})^{p-1} \int_{\widehat{B}_0} ( w-w_{\sigma \widehat{B}_0} )_+ \dmu .
\]
\end{corollary}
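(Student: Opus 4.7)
The plan is to estimate $\int_{B_0} f^p \dmu$ with $f = (w - w_{\sigma\widehat{B}_0})_+$ and $a = w_{\sigma\widehat{B}_0}$ through the distribution function, combining Theorem~\ref{reshetnyak} with a substitution tailored to its power decay. The starting point will be Cavalieri's principle,
\[
\int_{B_0} f^p \dmu = p a^p \int_0^\infty \lambda^{p-1} \mu(B_0 \cap \{f > \lambda a\}) \dla,
\]
split at the threshold $\lambda_0 = \alpha C_\mu \sigma^D \varepsilon$ from Theorem~\ref{reshetnyak}.

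For the low part ($\lambda \in [0,\lambda_0]$), the bound $\lambda^{p-1} \leq \lambda_0^{p-1}$ (valid since $p > 1$) together with a reversal of Cavalieri, $\int_0^{\lambda_0} \mu(B_0 \cap \{f > \lambda a\}) \dla \leq a^{-1} \int_{\widehat{B}_0} f \dmu$, gives a contribution of $p \lambda_0^{p-1} a^{p-1} \int_{\widehat{B}_0} f \dmu$; since $\lambda_0$ is a constant multiple of $\varepsilon$, this is already of the desired form $C \varepsilon^{p-1} a^{p-1} \int_{\widehat{B}_0} f \dmu$.

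For the high part ($\lambda \geq \lambda_0$), substituting the bound from Theorem~\ref{reshetnyak} reduces the problem to controlling
\[
I := \int_{\lambda_0}^\infty \lambda^{p-1}(1+\lambda)^{-1/(A\varepsilon)} \dla,
\]
and the whole argument hinges on showing $I \leq C(p) \varepsilon^p$, which is precisely what absorbs the stray factor $1/\varepsilon$ in Theorem~\ref{reshetnyak} and produces the claimed $\varepsilon^{p-1}$. I will obtain this bound via the substitution $\lambda = e^{A\varepsilon s} - 1$, under which $(1+\lambda)^{-1/(A\varepsilon)} = e^{-s}$ and $d\lambda = A\varepsilon e^{A\varepsilon s}\, ds$; combined with the elementary inequality $e^{A\varepsilon s} - 1 \leq A\varepsilon s \cdot e^{A\varepsilon s}$, this yields
\[
I \leq (A\varepsilon)^p \int_0^\infty s^{p-1} e^{-s(1 - pA\varepsilon)} \ds.
\]
The hypothesis $p \leq 1/(2A\varepsilon)$ forces $1 - pA\varepsilon \geq 1/2$, so the remaining integral is dominated by $2^p \Gamma(p)$, yielding $I \leq (2A)^p \Gamma(p)\,\varepsilon^p$. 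Summing the low and high contributions then gives the corollary with a constant of the form $C(C_\mu, \sigma, \eta, p)$. The main point of substance is identifying the substitution that turns the power decay from Theorem~\ref{reshetnyak} into a Gamma-type integrand; after that, the rest is routine bookkeeping.
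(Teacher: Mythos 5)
Your proposal is correct and follows the same route as the paper: Cavalieri's principle, a split of the $\lambda$-integral at $\lambda_0=\alpha C_\mu\sigma^D\varepsilon$, the trivial bound $\lambda^{p-1}\le\lambda_0^{p-1}$ on the low part, and Theorem~\ref{reshetnyak} on the tail. The only genuine difference is how the tail integral $\int_{\lambda_0}^\infty \lambda^{p-1}(1+\lambda)^{-1/(A\varepsilon)}\,d\lambda$ is controlled: the paper extends it to $(0,\infty)$, identifies it as the beta function $B\bigl(p,\tfrac{1}{A\varepsilon}-p\bigr)$ and invokes the asymptotic formula $B(p,y-p)=\Gamma(p)y^{-p}\bigl(1+\mathcal{O}(1/y)\bigr)$ from Abramowitz--Stegun to get the crucial bound $\le C_1(p)A^p\varepsilon^p$, whereas you obtain the same bound directly via the substitution $\lambda=e^{A\varepsilon s}-1$ together with $e^x-1\le xe^x$, which under the standing assumption $pA\varepsilon\le\tfrac12$ gives the fully explicit estimate $(2A)^p\Gamma(p)\varepsilon^p$. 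Your variant is slightly more self-contained: it produces an explicit constant and sidesteps the (mild, but unstated) uniformity issue in applying an asymptotic expansion over the range $1<p\le\tfrac{1}{2A\varepsilon}$, at the cost of a short computation; the paper's version is shorter by citation. Both yield the claimed dependence $C=C(C_\mu,\sigma,\eta,p)$ and the factor $\varepsilon^{p-1}$ after absorbing the $1/\varepsilon$ from Theorem~\ref{reshetnyak}.
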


\begin{proof}
Denote
\[
\Omega_\lambda = \{ x \in B_0: ( w-w_{\sigma \widehat{B}_0} )_+ > \lambda w_{\sigma \widehat{B}_0} \} .
\]
Applying Cavalieri's principle, we obtain
\begin{align*}
\int_{B_0} ( w-w_{\sigma \widehat{B}_0} )^p_+ \dmu = p (w_{\sigma \widehat{B}_0})^p \int_0^\infty \lambda^{p-1} \mu(\Omega_\lambda) \dla .
\end{align*}
For $0< \lambda < \lambda_0 = \alpha C_\mu \sigma^D \varepsilon $ we have
$\lambda^{p-1} \leq \lambda_0^{p-1}$,
and thus it holds that
\begin{align*}
\int_0^{\lambda_0} \lambda^{p-1} \mu(\Omega_\lambda) \dla &\leq \lambda_0^{p-1} \int_0^\infty \mu(\Omega_\lambda) \dla = (\alpha C_\mu \sigma^D)^{p-1} \varepsilon^{p-1} \frac{1}{w_{\sigma \widehat{B}_0}} \int_{B_0} ( w-w_{\sigma \widehat{B}_0} )_+ \dmu .
\end{align*}
On the other hand, for $\lambda \geq \lambda_0$ we may apply Theorem~\ref{reshetnyak} to get
a constant $C_0$ such that
\begin{align*}
\int_{\lambda_0}^\infty \lambda^{p-1} \mu(\Omega_\lambda) \dla &\leq \int_{\lambda_0}^\infty  \lambda^{p-1} \bigg( \frac{1}{1+\lambda} \bigg)^{1/(A\varepsilon)} \frac{C_0}{ \varepsilon w_{\sigma \widehat{B}_0}} \int_{\widehat{B}_0} ( w-w_{\sigma \widehat{B}_0} )_+ \dmu \dla \\
&\leq \int_{0}^\infty \frac{\lambda^{p-1}}{(1+\lambda)^{1/(A\varepsilon)}}  \dla \frac{C_0}{ \varepsilon w_{\sigma \widehat{B}_0}} \int_{\widehat{B}_0} ( w-w_{\sigma \widehat{B}_0} )_+ \dmu \\
&= B\Bigl(p,\tfrac{1}{A\varepsilon}-p\Bigr) \frac{C_0}{ \varepsilon w_{\sigma \widehat{B}_0}} \int_{\widehat{B}_0} ( w-w_{\sigma \widehat{B}_0} )_+ \dmu ,
\end{align*}
where 
$ A = C_\mu (5\sigma)^D e$
and $B$ is the beta function
\[
B\Bigl(p,\tfrac{1}{A\varepsilon}-p\Bigr) = \frac{\Gamma(p) \Gamma\bigl(\tfrac{1}{A\varepsilon}-p\bigr)}{\Gamma\bigl(\tfrac{1}{A\varepsilon}\bigr)} .
\]
By \cite[6.1.47]{abramowitz}, it holds that
\[
B(p,y-p) = \frac{\Gamma(p)}{y^p} \Bigl(1 + \mathcal{O}\bigl(\tfrac{1}{y}\bigr) \Bigr)
\]
as $y\to\infty$.
Assuming that $p\leq \frac{1}{2A\varepsilon}$, we have $\tfrac{1}{A\varepsilon}-p \geq \frac{1}{2A\varepsilon}$.
It follows that there exists a constant $C_1=C_1(p)$ such that
\[
B\Bigl(p,\tfrac{1}{A\varepsilon}-p\Bigr) \leq C_1 A^p \varepsilon^p
\]
for $0<\varepsilon<\frac{1}{2A}$ and $1<p\leq \frac{1}{2A\varepsilon}$.
Therefore, we have
\begin{align*}
\int_{\lambda_0}^\infty \lambda^{p-1} \mu(\Omega_\lambda) \dla &\leq 
C_1 A^p \varepsilon^{p-1} \frac{C_0}{ w_{\sigma \widehat{B}_0}} \int_{\widehat{B}_0} ( w-w_{\sigma \widehat{B}_0} )_+ \dmu .
\end{align*}
By combining the obtained estimates, we conclude that
\begin{align*}
\int_{B_0} ( w-w_{\sigma \widehat{B}_0} )^p_+ \dmu \leq C \varepsilon^{p-1} (w_{\sigma \widehat{B}_0})^{p-1} \int_{\widehat{B}_0} ( w-w_{\sigma \widehat{B}_0} )_+ \dmu ,
\end{align*}
where $C =  p(\alpha C_\mu \sigma^D)^{p-1}  + p C_1 A^p C_0 $.
\end{proof}

We may deduce a reverse H\"older inequality from the previous corollary. This tells that functions in the weak Gurov--Reshetnyak class satisfy a weak reverse H\"older inequality. Moreover, it shows the asymptotic behavior of the upper bound of the exponent $p$ at infinity when $\varepsilon$ tends to zero.

\begin{corollary}
\label{reverseHolder1}
Let $0<\varepsilon<\frac{1}{2A}$,
where $A = C_\mu (5\sigma)^D e$.
Assume that $w \in WGR_{\varepsilon,\sigma}(\sigma\widehat{B}_0)$.
Then for every $1<p\leq\frac{1}{2A\varepsilon}$ there exists a constant $C=C(C_\mu, \sigma, \eta, p)$ such that
\[
\biggl( \dashint_{B_0} w^p \dmu \biggr)^\frac{1}{p} \leq (C \varepsilon + 1) \dashint_{\sigma \widehat{B}_0} w \dmu . 
\]
\end{corollary}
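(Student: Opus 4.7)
The plan is to reduce the desired inequality to the oscillation bound in Corollary~\ref{oscRHI} by the standard trick of writing $w$ as the sum of its positive oscillation and its average. Write $a = w_{\sigma\widehat{B}_0}$. Pointwise we have $w \leq (w-a)_+ + a$, so Minkowski's inequality in $L^p(B_0, \dmu/\mu(B_0))$ yields
\[
\biggl(\dashint_{B_0} w^p \dmu\biggr)^{1/p} \leq \biggl(\dashint_{B_0} (w-a)_+^p \dmu\biggr)^{1/p} + a .
\]
Since $a = \dashint_{\sigma\widehat{B}_0} w \dmu$, it suffices to show that the first term on the right is bounded by a constant times $\varepsilon a$.

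Next I would invoke Corollary~\ref{oscRHI} directly to estimate
\[
\int_{B_0}(w-a)_+^p \dmu \leq C \varepsilon^{p-1} a^{p-1} \int_{\widehat{B}_0}(w-a)_+ \dmu ,
\]
and then apply the weak Gurov--Reshetnyak hypothesis to the ball $\widehat{B}_0$ itself (this is permitted because $\sigma\widehat{B}_0 \subset \sigma\widehat{B}_0$, the domain on which $w \in WGR_{\varepsilon,\sigma}$) to get
\[
\int_{\widehat{B}_0}(w-a)_+ \dmu \leq \varepsilon w(\sigma\widehat{B}_0) = \varepsilon a \mu(\sigma\widehat{B}_0) .
\]
Combining gives $\int_{B_0}(w-a)_+^p \dmu \leq C \varepsilon^p a^p \mu(\sigma\widehat{B}_0)$.

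Finally, I would normalize by $\mu(B_0)$ and absorb the comparison $\mu(\sigma\widehat{B}_0)/\mu(B_0)$ into the constant using~\eqref{eq:measureratio}: since $\sigma\widehat{B}_0 = \sigma(1+\eta)B_0$, the doubling property bounds this ratio by a quantity depending only on $C_\mu$, $\sigma$, and $\eta$. After taking $p$-th roots,
\[
\biggl(\dashint_{B_0}(w-a)_+^p \dmu\biggr)^{1/p} \leq C' \varepsilon a ,
\]
with $C'$ depending on $C_\mu$, $\sigma$, $\eta$, $p$. Substituting back into the Minkowski estimate produces
\[
\biggl(\dashint_{B_0} w^p \dmu\biggr)^{1/p} \leq (C'\varepsilon+1) a = (C\varepsilon+1) \dashint_{\sigma\widehat{B}_0} w \dmu ,
\]
which is the claim. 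There is no real obstacle here beyond careful bookkeeping of constants; the self-improvement built into Corollary~\ref{oscRHI} and the Gurov--Reshetnyak hypothesis applied on $\widehat{B}_0$ do all of the substantive work, while the Minkowski splitting and the doubling comparison are routine.
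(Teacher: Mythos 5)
Your proposal is correct and follows essentially the same route as the paper: apply Corollary~\ref{oscRHI}, bound $\int_{\widehat{B}_0}(w-w_{\sigma\widehat{B}_0})_+\dmu$ by $\varepsilon\, w(\sigma\widehat{B}_0)$ using the $WGR_{\varepsilon,\sigma}$ hypothesis on the ball $\widehat{B}_0$, compare $\mu(\sigma\widehat{B}_0)$ with $\mu(B_0)$ by doubling, and finish with the pointwise splitting $w\leq (w-w_{\sigma\widehat{B}_0})_+ + w_{\sigma\widehat{B}_0}$ and Minkowski's inequality. The only difference is cosmetic: you make the Minkowski step explicit at the start, whereas the paper leaves it implicit in its closing remark.
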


\begin{proof}
By Corollary~\ref{oscRHI}, there exists a constant $C_0$ such that
\begin{align*}
\int_{B_0} ( w-w_{\sigma \widehat{B}_0} )^p_+ \dmu &\leq C_0 \varepsilon^{p-1} (w_{\sigma \widehat{B}_0})^{p-1} \int_{\widehat{B}_0} ( w-w_{\sigma \widehat{B}_0} )_+ \dmu 
\leq C_0 \varepsilon^{p-1} (w_{\sigma \widehat{B}_0})^{p-1} \varepsilon w(\sigma \widehat{B}_0) \\
&= C_0 \varepsilon^{p}  (w_{\sigma \widehat{B}_0})^{p} \mu(\sigma \widehat{B}_0) 
\leq C_0 \varepsilon^{p}  (w_{\sigma \widehat{B}_0})^{p} C_\mu ((1+\eta)\sigma)^D \mu(B_0) \\
&= C^p \varepsilon^{p}  (w_{\sigma \widehat{B}_0})^{p} \mu(B_0) ,
\end{align*}
where $C^p = C_0 C_\mu ((1+\eta)\sigma)^D$.
Thus, we have
\begin{align*}
\biggl( \dashint_{B_0} ( w-w_{\sigma \widehat{B}_0} )^p_+ \dmu \biggr)^\frac{1}{p} &\leq C \varepsilon w_{\sigma \widehat{B}_0} ,
\end{align*}
from which we can conclude the claim.
\end{proof}

It is possible to obtain a smaller ball on the right-hand side in Corollary~\ref{reverseHolder1}.
We need the following lemma for this.

\begin{lemma}
\label{decompose}
Let $B_0 = B(x_0,r_0) \subset X$ be a ball, $\sigma>1$ and $\eta>0$. There exist $N = N(C_\mu,\sigma,\eta)$ 
balls $B_i=B(x_i,r_i)$ such that $x_i \in B_0$, $B_0 \subset \bigcup_{i=1}^N B_i$, $r_i = \frac{\sigma - 1}{\sigma (1+\eta)} r_0 $ and $\sigma \widehat{B}_i = \sigma(1+\eta) B_i \subset \sigma B_0$.
\end{lemma}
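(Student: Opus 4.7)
The plan is to construct the $B_i$ as a maximal $r$-separated net in $B_0$ at the prescribed radius, and then read off the three required properties together with the doubling bound on $N$. Set $r = \frac{\sigma - 1}{\sigma(1+\eta)} r_0$, which is the radius demanded by the statement, and let $\{x_1,\dots,x_N\} \subset B_0$ be a maximal subset with the separation property $d(x_i,x_j) \geq r$ for $i\neq j$ (maximal in the sense that no point of $B_0$ can be added without violating the property). Define $B_i = B(x_i,r)$.

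First I would verify the covering $B_0 \subset \bigcup_{i=1}^N B_i$. This is immediate from maximality: if $y \in B_0$ did not lie in any $B_i$, then $d(y,x_i) \geq r$ for all $i$, contradicting maximality. Next, I would verify the inclusion $\sigma \widehat{B}_i = B(x_i,\sigma(1+\eta)r) \subset \sigma B_0$. For any $y$ in this ball, the triangle inequality gives
\[
d(y,x_0) < \sigma(1+\eta) r + d(x_i,x_0) < \sigma(1+\eta)\cdot \tfrac{\sigma-1}{\sigma(1+\eta)} r_0 + r_0 = \sigma r_0,
\]
using $x_i \in B_0$ and the exact choice of $r$. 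This is the reason $r$ is chosen to be the specific quantity appearing in the statement.

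The remaining task is to show $N$ is bounded by a constant depending only on $C_\mu$, $\sigma$, $\eta$. The separation property implies that the smaller balls $B(x_i, r/2)$ are pairwise disjoint, and they are all contained in $B(x_0, r_0 + r/2) \subset 2 B_0$. Hence
\[
\sum_{i=1}^N \mu(B(x_i, r/2)) \leq \mu(2 B_0) \leq C_\mu \mu(B_0).
\]
On the other hand, applying the doubling estimate~\eqref{eq:measureratio} to the pair $(x_0,r_0)$, $(x_i,r/2)$ (note $x_i \in B(x_0,r_0)$ and $r/2 \leq r_0$), I get
\[
\mu(B(x_i,r/2)) \geq C_\mu^{-2}\bigl(\tfrac{r}{2r_0}\bigr)^D \mu(B_0).
\]
Summing and combining the two estimates yields
\[
N \leq C_\mu^{3} \bigl(\tfrac{2r_0}{r}\bigr)^D = C_\mu^{3} \Bigl( \tfrac{2\sigma(1+\eta)}{\sigma-1}\Bigr)^D,
\]
which depends only on $C_\mu$, $\sigma$, $\eta$, as required.

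The argument has no genuine obstacle; the only point that needs care is aligning the radius $r$ with the containment $\sigma \widehat B_i \subset \sigma B_0$, which forces the specific value $r = \frac{\sigma-1}{\sigma(1+\eta)} r_0$, and then running a standard doubling-measure packing estimate for the count.
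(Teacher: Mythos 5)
Your proof is correct, and it reaches the lemma by a slightly different covering mechanism than the paper. You build a maximal $r$-separated net of centers in $B_0$ with $r=\frac{\sigma-1}{\sigma(1+\eta)}r_0$, get the covering from maximality, the inclusion $\sigma\widehat{B}_i\subset\sigma B_0$ from the triangle inequality (exactly as in the paper, where this is the one computation forcing the value of $r$), and bound $N$ by the standard packing argument: the half-balls $B(x_i,r/2)$ are disjoint, sit inside $2B_0$, and each has measure at least $C_\mu^{-2}(r/2r_0)^D\mu(B_0)$ by \eqref{eq:measureratio}, giving $N\leq C_\mu^3\bigl(\frac{2\sigma(1+\eta)}{\sigma-1}\bigr)^D$. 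The paper instead invokes the $5$-covering theorem applied to the family $\{B(x,\frac{\rho}{5}r_0)\}_{x\in B_0}$ with $\rho=\frac{\sigma-1}{\sigma(1+\eta)}$, so the covering comes from the $5$-dilates of a disjoint subfamily, and the count is obtained by comparing the disjoint $\frac15$-balls with $\mu(B_0)$, yielding $N\leq C_\mu^2\bigl(\frac{10\sigma(1+\eta)}{\sigma-1}+2\bigr)^D$. The two routes are equally standard; yours is marginally more self-contained (no appeal to the $5$-covering theorem) and gives a comparable constant, while the paper's reuses a tool already cited elsewhere in the text. One small point of logical hygiene in your version: the existence of a finite maximal $r$-separated set should be justified, e.g.\ by first observing that your packing estimate bounds the cardinality of \emph{every} $r$-separated subset of $B_0$, so a set of maximal cardinality exists and is maximal in the required sense; this is routine and does not affect correctness.
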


\begin{proof}
Consider a collection of balls $\{ B(x, \tfrac{\rho}{5} r_0 ) \}_{x\in B_0}$, where $\rho = \frac{\sigma - 1}{\sigma (1+\eta)}$.
By the 5-covering theorem~\cite[p.~2]{heinonen}, there exist pairwise disjoint balls $\tfrac{1}{5} B_i = B(x_i, \tfrac{\rho}{5} r_0)$ such that $x_i \in B_0$ and $B_0 \subset \bigcup_i B_i$.
Since $x_i \in B_0$, it holds that $B_0 \subset B(x_i,2r_0)$.
The doubling property implies
\[
\mu(B_0) \leq \mu(B(x_i,2r_0)) \leq C_\mu \biggl( \frac{10}{\rho} \biggr)^D \mu(B(x_i,\tfrac{\rho}{5} r_0)) = C_\mu \biggl( \frac{10}{\rho} \biggr)^D \mu(\tfrac{1}{5} B_i) .
\]
Since $\tfrac{1}{5} B_i$ are pairwise disjoint and $\tfrac{1}{5} B_i = B(x_i,\tfrac{\rho}{5} r_0) \subset (1+\tfrac{\rho}{5})B_0 $, we have
\[
\sum_{i=1}^N \mu(\tfrac{1}{5} B_i) \leq \mu((1+\tfrac{\rho}{5})B_0) \leq C_\mu \biggl( 1+\frac{\rho}{5} \biggr)^D \mu(B_0) .
\]
By combining the two previous estimates, we obtain
\begin{align*}
N 
= \sum_{i=1}^N 1 \leq C_\mu^2 \biggl( \frac{10}{\rho} \Bigl( 1+\frac{\rho}{5} \Bigr) \biggr)^D = C_\mu^2 \biggl( \frac{10}{\rho} + 2 \biggr)^D = C_\mu^2 \biggl( \frac{10 \sigma(1+\eta) }{\sigma -1} + 2 \biggr)^D .
\end{align*}
Finally, we note that $\sigma \widehat{B}_i \subset \sigma B_0$ since
$r(\sigma \widehat{B}_i) = \sigma (1+\eta) \rho r_0 = (\sigma - 1) r_0$.
\end{proof}

\begin{corollary}
Let $0<\varepsilon<\frac{1}{2A}$,
where $A = C_\mu (5\sigma)^D e$.
Assume that $w \in WGR_{\varepsilon,\sigma}(\sigma\widehat{B}_0)$. 
Then for every $1<p\leq\frac{1}{2A\varepsilon}$ there exists a constant $C=C(C_\mu, \sigma, \eta, p)$ such that
\[
\biggl( \dashint_{B_0} w^p \dmu \biggr)^\frac{1}{p} \leq C \dashint_{\sigma B_0} w \dmu .
\]
\end{corollary}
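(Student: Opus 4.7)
The plan is to cover $B_0$ by the finitely many smaller balls furnished by Lemma~\ref{decompose}, apply Corollary~\ref{reverseHolder1} on each of them, and then absorb everything into an average over $\sigma B_0$ using the doubling property.

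First, let $B_1,\dots,B_N$ be the balls provided by Lemma~\ref{decompose}, so that $x_i\in B_0$, $r_i = \tfrac{\sigma-1}{\sigma(1+\eta)} r_0$, $B_0\subset \bigcup_{i=1}^N B_i$, and $\sigma\widehat{B}_i \subset \sigma B_0$, with $N=N(C_\mu,\sigma,\eta)$. Since $\sigma \widehat{B}_i \subset \sigma B_0\subset \sigma\widehat{B}_0$, the assumption $w\in WGR_{\varepsilon,\sigma}(\sigma\widehat{B}_0)$ restricts to $w\in WGR_{\varepsilon,\sigma}(\sigma\widehat{B}_i)$ for every $i$. Thus Corollary~\ref{reverseHolder1} applies on each $B_i$ and gives a constant $C_1=C_1(C_\mu,\sigma,\eta,p)$ such that
\[
\biggl( \dashint_{B_i} w^p \dmu \biggr)^{1/p} \leq C_1 \dashint_{\sigma \widehat{B}_i} w \dmu .
\]

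Next, I compare the average on $\sigma\widehat{B}_i$ with the average on $\sigma B_0$. Because $x_i\in B_0$, the triangle inequality gives $\sigma B_0 \subset B(x_i,(\sigma+1)r_0)$, while $\sigma\widehat{B}_i = B(x_i,(\sigma-1)r_0)$. Hence by the doubling property
\[
\frac{\mu(\sigma B_0)}{\mu(\sigma\widehat{B}_i)} \leq C_\mu\left(\frac{\sigma+1}{\sigma-1}\right)^{D} =: C_2,
\]
so that $\dashint_{\sigma\widehat{B}_i} w\dmu \leq C_2 \dashint_{\sigma B_0} w\dmu$. Combined with the previous inequality,
\[
\int_{B_i} w^p \dmu \leq \mu(B_i)\, (C_1 C_2)^p \biggl(\dashint_{\sigma B_0} w\dmu\biggr)^p .
\]
Since $r_i<r_0$ and $x_i\in B_0$, we have $B_i\subset 2B_0$, giving $\mu(B_i)\leq C_\mu\mu(B_0)$.

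Finally, I sum over $i$ using the covering $B_0\subset\bigcup_i B_i$ and the bound on $N$:
\[
\int_{B_0} w^p \dmu \leq \sum_{i=1}^N \int_{B_i} w^p \dmu \leq N C_\mu (C_1 C_2)^p \mu(B_0) \biggl(\dashint_{\sigma B_0} w\dmu\biggr)^p .
\]
Dividing by $\mu(B_0)$ and taking the $p$-th root yields the claim with $C=(NC_\mu)^{1/p}C_1 C_2$, which depends only on $C_\mu,\sigma,\eta,p$.

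No step here is really an obstacle: the nontrivial work has already been done in Corollary~\ref{reverseHolder1} and Lemma~\ref{decompose}. The only small care is in checking that the dilations line up so that Corollary~\ref{reverseHolder1} can actually be invoked on every $B_i$, which is why Lemma~\ref{decompose} was designed with the radius $r_i=\tfrac{\sigma-1}{\sigma(1+\eta)}r_0$ guaranteeing $\sigma\widehat{B}_i\subset\sigma B_0$.
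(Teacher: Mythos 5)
Your proposal is correct and follows essentially the same route as the paper: cover $B_0$ by the balls of Lemma~\ref{decompose}, apply Corollary~\ref{reverseHolder1} on each $B_i$, use $\sigma\widehat{B}_i\subset\sigma B_0$ together with a doubling comparison of $\mu(\sigma B_0)$ and $\mu(\sigma\widehat{B}_i)$, and sum over the $N$ balls. The only differences are cosmetic (you compare measures via concentric doubling with ratio $(\sigma+1)/(\sigma-1)$ rather than via \eqref{eq:measureratio}, and you make explicit the restriction of the $WGR$ hypothesis to $\sigma\widehat{B}_i$, which the paper leaves implicit).
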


\begin{proof}
Apply Lemma~\ref{decompose} and
Corollary~\ref{reverseHolder1} with the constant $C_0 = C\varepsilon+1$ to get
\begin{align*}
\biggl( \dashint_{B_0} w^p \dmu \biggr)^\frac{1}{p} &\leq \Biggl( \frac{1}{\mu(B_0)} \sum_{i=1}^N \int_{B_i} w^p \dmu \Biggr)^\frac{1}{p} = \Biggl( \sum_{i=1}^N \frac{\mu(B_i)}{\mu(B_0)}  \dashint_{B_i} w^p \dmu \Biggr)^\frac{1}{p} \\
&\leq  C_0 \Biggl( \sum_{i=1}^N \frac{\mu(2B_0)}{\mu(B_0)} \biggl( \dashint_{\sigma \widehat{B}_i} w \dmu  \biggr)^p \Biggr)^\frac{1}{p} \\
&\leq C_0 C_\mu^\frac{1}{p} \Biggl( \sum_{i=1}^N \frac{1}{\mu(\sigma \widehat{B}_i)^p} \Biggr)^\frac{1}{p} \int_{\sigma B_0} w \dmu \\
&\leq C_0 C_\mu^{2+\frac{1}{p}} \biggl( \frac{\sigma}{\sigma-1} \biggr)^{D} \Biggl( \sum_{i=1}^N \frac{1}{\mu(\sigma B_0)^p} \Biggr)^\frac{1}{p} \int_{\sigma B_0} w \dmu \\
&= C_0 C_\mu^{2+\frac{1}{p}} \biggl( \frac{\sigma}{\sigma-1} \biggr)^{D} N^\frac{1}{p} \dashint_{\sigma B_0} w \dmu ,
\end{align*}
where we also used $B_i\subset 2B_0$ and \eqref{eq:measureratio} in the form of
\[
\mu(\sigma B_0) \leq C_\mu^2 \bigg( \frac{\sigma}{\sigma-1} \bigg)^D \mu(\sigma \widehat{B}_i) .
\]
\end{proof}


\begin{thebibliography}{10}

\bibitem{AaltoBerkovits}
D.~Aalto and L.~Berkovits, \emph{Asymptotical stability of {M}uckenhoupt
  weights through {G}urov-{R}eshetnyak classes}, Trans. Amer. Math. Soc.
  \textbf{364} (2012), no.~12, 6671--6687.

\bibitem{abramowitz}
M.~Abramowitz and I.~A. Stegun, \emph{Handbook of mathematical functions with formulas, graphs, and mathematical tables}, Dover, New York, 1972.

\bibitem{andersonhytonentapiola2017}
T.~C. Anderson, T.~Hyt\"{o}nen, and O.~Tapiola, \emph{Weak {$A_\infty$} weights
  and weak reverse {H}\"{o}lder property in a space of homogeneous type}, J.
  Geom. Anal. \textbf{27} (2017), no.~1, 95--119.

\bibitem{BerkovitsKinnunenMartell}
L.~Berkovits, J.~Kinnunen, and J.~M. Martell, \emph{Oscillation estimates,
  self-improving results and good-{$\lambda$} inequalities}, J. Funct. Anal.
  \textbf{270} (2016), no.~9, 3559--3590.

\bibitem{bjorn}
A.~Bj\"{o}rn and J.~Bj\"{o}rn, \emph{Nonlinear potential theory on metric
  spaces}, EMS Tracts in Mathematics, vol.~17, European Mathematical Society
  (EMS), Z\"{u}rich, 2011.

\bibitem{bojarski1985}
B.~Bojarski, \emph{Remarks on the stability of reverse {H}\"{o}lder
  inequalities and quasiconformal mappings}, Ann. Acad. Sci. Fenn. Ser. A I
  Math. \textbf{10} (1985), 89--94.

\bibitem{bojarski1989}
\bysame, \emph{On {G}urov-{R}eshetnyak classes}, Bounded Mean Oscillation in
  complex analysis, University of Joensuu, Publications in sciences, no. 14,
  pp. 21--42, Joensuu, 1989.

\bibitem{duo2013}
J.~Duoandikoetxea, \emph{Forty years of muckenhoupt weights}, Function Spaces
  and Inequalities (Paseky nad Jizerou, June 2013), Lecture Notes (J. Lukeš
  and L. Pick, eds.), Matfyzpress, Prague, pp. 23–75.

\bibitem{duomartinombrosi2016}
J.~Duoandikoetxea, F.~J. Mart\'{\i}n-Reyes, and S.~Ombrosi, \emph{On the
  {$A_\infty$} conditions for general bases}, Math. Z. \textbf{282} (2016),
  no.~3-4, 955--972.

\bibitem{franciosi1989}
M.~Franciosi, \emph{Weighted rearrangements and higher integrability results},
  Studia Math. \textbf{92} (1989), no.~2, 131--139.

\bibitem{franciosi1991}
\bysame, \emph{On weak reverse integral inequalities for mean oscillations},
  Proc. Amer. Math. Soc. \textbf{113} (1991), no.~1, 105--112.

\bibitem{gurov1975}
L.~G. Gurov, \emph{The stability of {L}orentz transformations. {E}stimates for
  the derivatives}, Dokl. Akad. Nauk SSSR \textbf{220} (1975), 273--276.

\bibitem{gurovreshetnyak1976}
L.~G. Gurov and Yu.~G. Reshetnyak, \emph{A certain analogue of the concept of a
  function with bounded mean oscillation}, Sibirsk. Mat. Z. \textbf{17} (1976),
  no.~3, 540--546.

\bibitem{heinonen}
J.~Heinonen, \emph{Lectures on analysis on metric spaces}, Universitext,
  Springer-Verlag, New York, 2001.

\bibitem{hytonenperezrela2012}
T.~Hyt\"{o}nen, C.~P\'{e}rez, and E.~Rela, \emph{Sharp reverse {H}\"{o}lder
  property for {$A_\infty$} weights on spaces of homogeneous type}, J. Funct.
  Anal. \textbf{263} (2012), no.~12, 3883--3899.

\bibitem{IndratnoMaldonadoSilwal2015}
S.~Indratno, D.~Maldonado, and S.~Silwal, \emph{A visual formalism for weights
  satisfying reverse inequalities}, Expo. Math. \textbf{33} (2015), no.~1,
  1--29.

\bibitem{iwaniec1982}
T.~Iwaniec, \emph{On {$L^{p}$}-integrability in {PDE}s and quasiregular
  mappings for large exponents}, Ann. Acad. Sci. Fenn. Ser. A I Math.
  \textbf{7} (1982), no.~2, 301--322.

\bibitem{KinnunenKurkiMudarra2022}
J.~Kinnunen, E.-K. Kurki, and C.~Mudarra, \emph{Characterizations of weak
  reverse {H}\"{o}lder inequalities on metric measure spaces}, Math. Z.
  \textbf{301} (2022), no.~3, 2269--2290.

\bibitem{kinnunenshukla2014a}
J.~Kinnunen and P.~Shukla, \emph{Gehring's lemma and reverse {H}\"{o}lder
  classes on metric measure spaces}, Comput. Methods Funct. Theory \textbf{14}
  (2014), no.~2-3, 295--314.

\bibitem{kinnunenshukla2014b}
\bysame, \emph{The structure of reverse {H}\"{o}lder classes on metric measure
  spaces}, Nonlinear Anal. \textbf{95} (2014), 666--675.

\bibitem{korenovskii1990}
A.~A. Korenovskyy, \emph{The connection between mean oscillations and exact
  exponents of summability of functions}, Mat. Sb. \textbf{181} (1990), no.~12,
  1721--1727.

\bibitem{Korenovskii2007}
\bysame, \emph{Mean oscillations and equimeasurable rearrangements of
  functions}, Lecture Notes of the Unione Matematica Italiana, vol.~4,
  Springer, Berlin; UMI, Bologna, 2007.

\bibitem{KorenovskyyLernerStokolos2002}
A.~A. Korenovskyy, A.~K. Lerner, and A.~M. Stokolos, \emph{A note on the
  {G}urov-{R}eshetnyak condition}, Math. Res. Lett. \textbf{9} (2002), no.~5-6,
  579--583.

\bibitem{KorenovskyyLernerStokolos2007}
\bysame, \emph{A note on the maximal {G}urov-{R}eshetnyak condition}, Ann.
  Acad. Sci. Fenn. Math. \textbf{32} (2007), no.~2, 461--470.

\bibitem{kortekansanen2011}
R.~Korte and O.~E. Kansanen, \emph{Strong {$A_\infty$}-weights are
  {$A_\infty$}-weights on metric spaces}, Rev. Mat. Iberoam. \textbf{27}
  (2011), no.~1, 335--354.

\bibitem{myyrylainen}
K.~Myyryl\"{a}inen, \emph{Median-type {J}ohn--{N}irenberg space in metric
  measure spaces}, J. Geom. Anal. \textbf{32} (2022), no.~4, 131.

\bibitem{reshetnyak1994}
Yu.~G. Reshetnyak, \emph{Stability theorems in geometry and analysis},
  Mathematics and its Applications, vol. 304, Kluwer Academic Publishers Group,
  Dordrecht, 1994.

\bibitem{sawyer1982}
E.~T. Sawyer, \emph{Two weight norm inequalities for certain maximal and
  integral operators}, Harmonic analysis ({M}inneapolis, {M}inn., 1981),
  Lecture Notes in Math., vol. 908, Springer, Berlin-New York, 1982,
  pp.~102--127.

\bibitem{Spadaro2012}
E.~Spadaro, \emph{Nondoubling ${A}_\infty$ weights}, Adv. Calc. Var. \textbf{5}
  (2012), no.~3, 345--354.

\bibitem{strombergtorchinsky1989}
J.-O. Str\"{o}mberg and A.~Torchinsky, \emph{Weighted {H}ardy spaces}, Lecture
  Notes in Mathematics, vol. 1381, Springer-Verlag, Berlin, 1989.

\bibitem{wik1987}
I.~Wik, \emph{Note on a theorem by {R}eshetnyak-{G}urov}, Studia Math.
  \textbf{86} (1987), no.~3, 287--290.

\end{thebibliography}
\end{document}